\newcommand{\bA}{{\mathbb A}}
\newcommand{\bB}{{\mathbb B}}
\newcommand{\bH}{{\mathbb H}}
\newcommand{\bI}{{\mathbb I}}
\newcommand{\bK}{{\mathbb K}}
\newcommand{\bS}{{\mathbb S}}
\newcommand{\bT}{{\mathbb T}}
\newcommand{\bV}{{\mathbb V}}
\newcommand{\bZ}{{\mathbb Z}}
\newcommand{\fa}{{\mathfrak a}}
\newcommand{\fd}{{\mathfrak d}}
\newcommand{\fe}{{\mathfrak e}}
\newcommand{\ff}{{\mathfrak f}}
\newcommand{\fg}{{\mathfrak g}}
\newcommand{\fh}{{\mathfrak h}}
\newcommand{\fr}{{\mathfrak r}}
\newcommand{\cG}{{\mathcal G}}
\newcommand{\cM}{{\mathcal M}}
\newcommand{\cP}{{\mathcal P}}
\newcommand{\Lie}{{{\mbox{\rm Lie}}}}
\newcommand{\Lip}{{{\mbox{\rm Lie}}^{[p]}}}
\newcommand{\Sym}{{{\mbox{\rm Sym}}}}
\newcommand{\Obs}{{{\mbox{\rm Obs}}}}
\newtheorem{theorem}{Theorem}[section]
\newtheorem{prop}[theorem]{Proposition}
\newtheorem{lemma}[theorem]{Lemma}
\newtheorem{cor}[theorem]{Corollary}
\newtheorem{prob}[theorem]{Problem}
\begin{document}
\title[Centrification]{Centrification of Algebras and Hopf Algebras}
\author{Dmitriy Rumynin}
\email{D.Rumynin@warwick.ac.uk}
\address{Department of Mathematics, University of Warwick, Coventry, CV4 7AL, UK
	\newline
	\hspace*{0.31cm}  Associated member of Laboratory of Algebraic Geometry, National
	Research University Higher School of Economics, Russia}
\thanks{The first author was partially supported within the framework of the HSE University Basic Research Program and the Russian Academic Excellence Project `5--100'}
\author{Matthew Westaway}
\email{M.P.Westaway@warwick.ac.uk}
\address{Department of Mathematics, University of Warwick, Coventry, CV4 7AL, UK}
\date{23/01/2020}
\subjclass[2010]{Primary 16S15, Secondary 16T05}
\keywords{Generators and relations for associative algebras, Gr\"{o}bner-Shirshov basis, deformation, Hopf algebra}

\begin{abstract}
  We investigate a method of construction of central
  deformations of associative algebras, which we call centrification.
We prove some general results in the case of Hopf algebras and provide several examples.
\end{abstract}

\maketitle

This paper has as its inception an investigation into the so-called ``universal Bannai-Ito algebra'' (cf. Section~\ref{s5}).
This algebra is derived from the anticommutator spin algebra (or from the Bannai-Ito algebra $BI^\omega$ for any triple $\omega$ of scalars) by taking its defining relations and
declaring them to be central in the new algebra.
This procedure, which we call {\em centrification}, grew into the main topic of this paper.

We immediately saw that the procedure appears in at least two classical contexts.
A restricted enveloping algebra $U_0$ for a field of characteristic $p>0$ has relations $X^p-X^{[p]}$ that use the $p$-structure on the corresponding restricted Lie algebra.
Declaring these elements central is equivalent to dropping these relations, so centrification yields the universal enveloping algebra $U$ (cf. Section~\ref{ug_from_ug}). More fundamentally, however, the central subalgebra generated by these relations is the ``$p$-centre'' of $U$. This is a subalgebra of great importance in the study of modular Lie algebras. For example, in the reductive case, subject to minor restrictions, the $p$-centre of $U$ is the only part of the centre of $U$ which cannot be seen in characteristic 0.

Even more classical objects
are {\em darstellungsgruppen} in Group Theory.
If $G$ is a perfect group, given by generators and relations as $G\cong F/R$,
its {\em darstellungsgruppe} is defined by $G^c \coloneqq [F,F]/[F,R]$.
It is the universal central extension of $G$ and  the kernel of the map $G^c \rightarrow G$ is the Schur multiplier of $G$. This gives the standard presentation of the Schur multiplier: $(R\cap [F,F])/[F,R]$.
The same construction for a perfect Lie algebra $\ff/\fr$ yields its universal central extension $[\ff,\ff]/[\ff,\fr]\rightarrow \ff/\fr$
(see Section~\ref{lie_alg}).


Perusing the literature, we have found several other examples of centrifications of associative algebras (see, for example, \cite{T}) but no general theory.
The goal of this paper is to summarize some examples of this phenomenon and, more substantively, to develop the foundations of the theory.
The most interesting problem concerns finding mechanisms that control the  behaviour of centrifications -- for instance, to ensure that they
are flat deformations. 
We have discovered several such mechanisms.
One is purely combinatorial in terms of Gr\"{o}bner-Shirshov bases (see Proposition~\ref{GSBasis}). 
Others mix combinatorial techniques with Hopf algebra structure (see Theorems~\ref{Th1}, \ref{Th2} and \ref{Th3}).

It is apparent that centrification is an important source of new associative algebras and central deformations.
We hope that after studying this the reader will also be convinced that centrification is an interesting algebraic construction, worth further research. For example, 
centrification could be utilised to investigate a problem of Terwilliger \cite[Problem 12.1]{T}.

Let us describe the content of the paper.
In Section~\ref{s1} we outline the construction.
We study its Hopf theoretic properties in Section~\ref{s2}.
The standard Hopf algebra structure on the free associative algebra is the subject of investigation in Section~\ref{s3}.
We explain two known examples of centrifications in the final two sections~\ref{s4} and \ref{s5}.

\

\section{Centrification}
\label{s1}
Let $\bK$ be a field. We work with an algebra given by generators and relations
\begin{equation} \label{alg_A}
	\bA=\bK\langle X_i , i\in I | R_j=0, j\in J \rangle \, .
\end{equation}
By its {\em partial centrification} we understand the new algebra
\begin{equation} \label{alg_Ac}
	\bA^{c,J_0} \coloneqq
	\bK\langle X_i , i\in I | X_iR_j=R_jX_i, (i,j)\in I\times J_0; R_j, j\in \overline{J_0} \rangle \, ,
\end{equation}
constructed by a subset $J_0\subseteq J$ where 
$\overline{J_0} \coloneqq J\setminus J_0$. 
The {\em full centrification} is 
$\bA^{c} \coloneqq \bA^{c,J}$. 
In other words, every relation $R_j$, $j\in J_0$,  is no longer a relation but a central element.
By $\overline{X}_i$ or $\overline{R}_j$ we denote the image of the corresponding element in $\bA^{c,J_0}$.
Let $\bZ$ be the subalgebra of $\bA^{c,J_0}$ generated by all $\overline{R}_j$, $j\in J_0$.
Clearly, $\bZ$ is a commutative $\bK$-algebra, central in $\bA^{c,J_0}$, admitting a canonical homomorphism
$$
\bZ \rightarrow \bK, \ \overline{R}_j \mapsto 0
$$
such that $\bA^{c,J_0} \otimes_{\bZ} \bK$ is isomorphic to $\bA$. The following problems about $\bZ$ appear interesting.
\begin{prob}
	Find an algorithm producing the relations of $\bZ$ in the generators $\overline{R}_j$, $j\in J_0$.
\end{prob}
\begin{prob}
	Investigate when $\bZ$ satisfies the standard properties of commutative algebras such as integrality, regularity, being Dedekind or a PID or a UFD, etc.
\end{prob}

We may also observe that if $\bA^{c,J_0}$ is an affine $\bK$-algebra and finitely generated as a $\bZ$-module, then we are already in a familiar situation, namely Hypotheses (H) in \cite[III.1]{BG}. In particular, we can import the following results from \cite{BG}:
\begin{itemize}
	\item $\bZ$ is a finitely-generated $\bK$-algebra,
	\item $\bA^{c,J_0}$ and $\bZ$ are Noetherian PI rings,
	\item all irreducible $\bA^{c,J_0}$-modules are finite-dimensional,
	\item $\bA^{c,J_0}$ is a Jacobson ring,
	\item two maximal ideals of $\bA^{c,J_0}$ belong to the same block if and only if they have the same intersection with $Z(\bA^{c,J_0})$, the centre of $\bA^{c,J_0}$.
\end{itemize}

Thus, the centrification is a universal (in a certain sense) central deformation of $\bA$. Let us consider a third algebra given by generators and relations:
\begin{equation} \label{alg_Az}
	\bA^{z,J_0}= \bZ\langle X_i , i\in I | R_j=\overline{R}_j, j\in J \rangle \, .
\end{equation}
Recall here that the $\overline{R}_j$ for $j\in J$ are elements of the commutative $\bK$-algebra $\bZ$. This is clear from the fact that $\bZ$ is, by definition, generated by the $R_j$ for $j\in J_0$, and the fact that for $j\in\overline{J_0}$ we have $\overline{R}_{j}=0$. The latter observation shows that, for each $j\in\overline{J_0}$, the generators $X_i$, $i\in I$, satisfy the relation $R_j=0$ in both $\bA$ and $\bA^{z,J_0}$. When we wish to distinguish the generators of $\bA^{z,J_0}$ from the generators of $\bA$, we write $\breve{X}_i$ for the generator of $\bA^{z,J_0}$ corresponding to the generator $X_i$ of $\bA$. The following fact is immediate. 
\begin{lemma}
	The natural homomorphism
	$$
	\bA^{c,J_0} \rightarrow \bA^{z,J_0}, \ \ \ \overline{X}_i \mapsto \breve{X}_i
	$$
	is an isomorphism of $\bZ$-algebras.
\end{lemma}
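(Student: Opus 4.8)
The plan is to exhibit mutually inverse $\bZ$-algebra homomorphisms between $\bA^{c,J_0}$ and $\bA^{z,J_0}$ by invoking the universal properties of the two presentations, and then to verify that the two composites fix the generators. Since the generators together with $\bZ$ generate each algebra, this will force both composites to be the identity.

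First I would construct $\phi \colon \bA^{c,J_0} \to \bA^{z,J_0}$. As $\bA^{c,J_0}$ is the quotient of the free $\bK$-algebra $\bK\langle X_i \rangle$ by the relations $X_iR_j = R_jX_i$ $((i,j)\in I\times J_0)$ and $R_j = 0$ $(j \in \overline{J_0})$, it suffices to send $X_i \mapsto \breve{X}_i$ and check these relations in $\bA^{z,J_0}$. There the defining relation $R_j = \overline{R}_j$ places $R_j(\breve{X})$ in the central base ring $\bZ$ for every $j$, so $\breve{X}_i$ commutes with $R_j(\breve{X})$ for $j \in J_0$; and for $j \in \overline{J_0}$ one has $\overline{R}_j = 0$, whence $R_j(\breve{X}) = 0$. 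Thus $\phi$ is a well-defined $\bK$-algebra map, and since $\phi(\overline{R}_j) = R_j(\breve{X}) = \overline{R}_j$ it carries the central subalgebra $\bZ \subseteq \bA^{c,J_0}$ identically onto the base ring $\bZ \subseteq \bA^{z,J_0}$; hence $\phi$ is a map of $\bZ$-algebras.

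Next I would construct $\psi \colon \bA^{z,J_0} \to \bA^{c,J_0}$. Viewing $\bA^{c,J_0}$ as a $\bZ$-algebra through the central inclusion $\bZ \hookrightarrow \bA^{c,J_0}$, the universal property of $\bA^{z,J_0}$ as a quotient of the free $\bZ$-algebra $\bZ\langle \breve{X}_i \rangle$ reduces the task to verifying that the images $\overline{X}_i$ satisfy $R_j(\overline{X}) = \overline{R}_j$ in $\bA^{c,J_0}$ for all $j \in J$. This holds essentially by definition: for $j \in J_0$ the element $\overline{R}_j$ is by construction the image of $R_j$, namely $R_j(\overline{X})$; and for $j \in \overline{J_0}$ both sides vanish, since $R_j = 0$ is a defining relation of $\bA^{c,J_0}$. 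So $\psi$ is a well-defined $\bZ$-algebra homomorphism with $\psi(\breve{X}_i) = \overline{X}_i$.

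Finally, the composites $\psi \circ \phi$ and $\phi \circ \psi$ fix the generators $\overline{X}_i$ and $\breve{X}_i$ respectively and restrict to the identity on $\bZ$, so each agrees with the identity on a generating set and is therefore the identity; hence $\phi$ is an isomorphism of $\bZ$-algebras. I do not expect a genuine obstacle here, in line with the statement being immediate; the only point demanding care is the bookkeeping of the two distinct $\bZ$-structures — $\bZ$ entering $\bA^{c,J_0}$ as an internal central subalgebra but entering $\bA^{z,J_0}$ as the external base ring — together with the observation that the base ring of $\bA^{z,J_0}$ is, by definition, taken to be precisely the central subalgebra $\bZ$ of $\bA^{c,J_0}$, so that the identification $\overline{R}_j \mapsto \overline{R}_j$ is automatically compatible on both sides.
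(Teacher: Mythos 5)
Your proof is correct: the paper states this lemma without proof (``The following fact is immediate''), and your argument via the universal properties of the two presentations --- checking that $\overline{X}_i \mapsto \breve{X}_i$ and $\breve{X}_i \mapsto \overline{X}_i$ are well defined and mutually inverse on generators --- is precisely the routine verification the authors leave to the reader. Your closing remark on the two roles of $\bZ$ (internal central subalgebra of $\bA^{c,J_0}$ versus external base ring of $\bA^{z,J_0}$) correctly identifies the only point requiring care, and your generator computation $\phi(\overline{R}_j)=R_j(\breve{X})=\overline{R}_j\cdot 1$ handles it.
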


Let $\cM\coloneqq M\langle X_i , i\in I\rangle= \{\mbox{monomials in }X_i, i\in I\}$ be the free monoid, equipped with an admissible order $\succeq$, for instance, the deg-lex ordering. For any element of the free $\bZ$-algebra $A\in \bZ\langle X_i \rangle$, write $\fd(A)\in\cM$ for the leading term of $A$.
It is clear that if the presentation~\eqref{alg_Az} is a Gr\"{o}bner-Shirshov basis, then the presentation~\eqref{alg_A} is a Gr\"{o}bner-Shirshov basis
(cf. \cite[1.3]{Kha} for a review of Gr\"{o}bner-Shirshov bases). 

\begin{prob} \label{main_prob}
	Suppose that the presentation~\eqref{alg_A} is a Gr\"{o}bner-Shirshov basis.
	Under which conditions is the presentation~\eqref{alg_Az} a Gr\"{o}bner-Shirshov basis?
	More generally, when is $\bA^{z,J_0}$ flat as a $\bZ$-module?
\end{prob}

We will give several examples when Problem~\ref{main_prob} has an affirmative answer later in the paper. However, we should expect no such answer in general, as the following analysis of the problem shows.

Without loss of generality we may assume that the monomial $\fd (R_j)$ appears in $R_j$ with coefficient $1$ for all $j\in J$.
Suppose the presentation~\eqref{alg_A} is a Gr\"{o}bner-Shirshov basis. Given $\fd (R_j)=ab, \fd (R_k)=bc\in \cM$ with a non-trivial $b$, we can reduce the composition 
\begin{equation} \label{comp_A}
	R_j\circ_{b}R_k\coloneqq R_jc-aR_k = 
	\sum_{i=1}^{n}\beta_i  u_i {R_{l_i}} v_i \in \bK \langle X_i \rangle
\end{equation}
for some $\beta_i\in \bK$, $u_i,v_i\in \cM$ with $abc \succ u_i \fd (R_{l_i}) v_i$.
The corresponding composition in the presentation~\eqref{alg_Az} becomes
\begin{equation} \label{comp_Az}
	(R_j - \overline{R}_j) \circ_{b} (R_k - \overline{R}_k) =
	\big( \sum_{i=1}^{n}\beta_i  u_i (R_{l_i} - \overline{R}_{l_i}) v_i \big) + \Obs (R_j\circ_{b}R_k) \, , 
\end{equation}
where {\em an obstacle element} 
\begin{equation} \label{Obst}
	\Obs (R_j\circ_{b}R_k)   =
	\overline{R_k}a - \overline{R_j}c + \sum_{i=1}^{n}\beta_i\overline{R}_{l_i}u_iv_i \in \bZ\langle X_i \rangle
\end{equation}
appears. The obstacle $\Obs (R_j\circ_{b}R_k)$ is also a relation, a consequence of the relations in the presentation~\eqref{alg_Az}.
The notation $\Obs (R_j\circ_{b}R_k)$ is ambiguous because the obstacle depends on a particular choice of the right-hand side of the equation~\eqref{comp_A}.

The following statement is immediate because an obstacle is a relation
that can be rewritten to zero.
\begin{prop}\label{GSBasis} 
	Suppose that the presentation~\eqref{alg_A} is a Gr\"{o}bner-Shirshov basis.
	The presentation~\eqref{alg_Az} is a Gr\"{o}bner-Shirshov basis if and only if for each composition, one of its obstacles can be written as
	$$
	\Obs (R_j\circ_{b}R_k)   =
	\sum_{s=1}^{m}\gamma_s  u_s ({R_{l_s}} -\overline{R}_{l_s}) v_s \in \bZ \langle X_i \rangle
	$$
	for some $\gamma_s\in \bZ$, $u_s,v_s\in \cM$ with $abc \succ u_s \fd (R_{l_s}) v_s$.
\end{prop}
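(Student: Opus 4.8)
The plan is to read the statement off the Composition-Diamond Lemma, applied to the defining relations $S'\coloneqq\{R_j-\overline{R}_j : j\in J\}$ of the presentation~\eqref{alg_Az} inside the free $\bZ$-algebra $\bZ\langle X_i\rangle$. First I would record the key normalisation: since each $\overline{R}_j$ lies in the ground ring $\bZ$ and each $\fd(R_j)$ is a non-trivial monomial, we have $\fd(R_j-\overline{R}_j)=\fd(R_j)$, with leading coefficient $1$ by the standing assumption. Hence the relations in $S'$ are monic and their leading terms coincide with those of~\eqref{alg_A}; in particular the overlaps $\fd(R_j)=ab$, $\fd(R_k)=bc$ giving rise to compositions are exactly those of~\eqref{alg_A}, and the monic Composition-Diamond Lemma says that~\eqref{alg_Az} is a Gr\"obner-Shirshov basis if and only if every such composition is trivial modulo $S'$ and its overlap monomial $abc$, i.e. can be written as $\sum_t\gamma_t p_t(R_{l_t}-\overline{R}_{l_t})q_t$ with $abc\succ p_t\fd(R_{l_t})q_t$.

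Next I would perform the (routine) computation already indicated in~\eqref{comp_Az}--\eqref{Obst}. Fixing a composition together with a reduction $R_j\circ_b R_k=\sum_{i=1}^n\beta_i u_iR_{l_i}v_i$ in $\bK\langle X_i\rangle$ (which exists because~\eqref{alg_A} is a Gr\"obner-Shirshov basis, with $abc\succ u_i\fd(R_{l_i})v_i$), and using that the $\overline{R}_\bullet$ are central, I would split the composition of~\eqref{alg_Az} as
$$(R_j-\overline{R}_j)\circ_b(R_k-\overline{R}_k)=T+\Obs(R_j\circ_b R_k),\qquad T\coloneqq\sum_{i=1}^n\beta_i u_i(R_{l_i}-\overline{R}_{l_i})v_i.$$
The point is that $T$ is \emph{already} a valid reduction: the order condition $abc\succ u_i\fd(R_{l_i})v_i$ is inherited verbatim from~\eqref{alg_A}. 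Moreover every monomial occurring in the obstacle~\eqref{Obst}, namely $a$, $c$ and the $u_iv_i$, is $\prec abc$ (the first two as proper factors of $abc$, the last because $u_iv_i\preceq u_i\fd(R_{l_i})v_i\prec abc$ since $\fd(R_{l_i})$ is non-trivial).

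Finally I would conclude the equivalence. As the composition equals $T+\Obs(R_j\circ_b R_k)$ with $T$ trivial modulo $S'$ and $abc$, the composition is trivial modulo $S'$ and $abc$ if and only if the obstacle is, which is precisely the asserted form $\sum_{s=1}^m\gamma_s u_s(R_{l_s}-\overline{R}_{l_s})v_s$ with $abc\succ u_s\fd(R_{l_s})v_s$. For the forward implication, if~\eqref{alg_Az} is a Gr\"obner-Shirshov basis then every composition is trivial, so subtracting the trivial summand $T$ (the required form is closed under $\bZ$-linear combinations) shows that every obstacle, in particular one of them, has the required form. For the converse it suffices that \emph{one} obstacle of each composition has this form, since then the composition $T+\Obs(R_j\circ_b R_k)$ is trivial, and as this holds for every composition the Composition-Diamond Lemma yields the claim.

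The step I expect to be the main obstacle is not the computation but pinning down the correct version of the Composition-Diamond Lemma over the commutative coefficient ring $\bZ$ rather than over a field: one must use that $S'$ consists of \emph{monic} relations, so that no additional coefficient compositions intervene and triviality modulo $S'$ retains its purely monomial formulation. I would also flag the mild hypotheses used silently above, namely that each $\fd(R_j)$ is a non-trivial monomial, and note that any inclusion compositions (where one leading term divides another) are handled by the identical splitting argument, so the overlap case treated above is representative.
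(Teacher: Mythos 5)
Your proposal is correct and is essentially the paper's argument in expanded form: the paper's one-line proof (``an obstacle is a relation that can be rewritten to zero'') is exactly your decomposition of each composition as $T+\Obs(R_j\circ_b R_k)$ with $T$ already trivial, so that triviality of all compositions in the Composition--Diamond Lemma is equivalent to each obstacle admitting the stated form. Your explicit flags --- that $\fd(R_j-\overline{R}_j)=\fd(R_j)$ is monic and non-trivial, that the set of trivial elements is closed under $\bZ$-linear combinations (giving both directions, including ``one obstacle'' versus ``every obstacle''), and that one needs the monic version of the Composition--Diamond Lemma over the commutative ring $\bZ$ --- are precisely the details the paper leaves implicit.
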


\section{Hopf Algebras}
\label{s2}

We now explore how the process of centrification interacts with Hopf-algebraic structures. Unless otherwise indicated, we incorporate by reference the Hopf-algebraic notation and definitions found in \cite{Mon}.

Throughout this section, we concern ourselves with the following setting. Let 
$$
\bH= \bK\langle X_i , i\in I | R_j=0, j\in \overline{J_0} \rangle
$$
be a Hopf algebra, and let 
$$
\bA= \bK\langle X_i , i\in I | R_j=0, j\in J \rangle
$$
be an algebra with, as before, $J=J_0\coprod\overline{J_0}$, such that the natural map $\bH\to\bA$ is an algebra homomorphism. 
Given an element $A\in \bK\langle X_i, i\in I\rangle$, consistent with Section~\ref{s1}, we denote
by $\overline{A}$ its image in $\bA^{c,J_0}$. 
Furthermore, we set $\widetilde{A}$ as its image in $\bH$ 
and $\widehat{A}$ as its image in $\bA$. We shall maintain this notation through the remainder of the paper.

\subsection{Quasicharacter Hopf Algebras}
Let us recall the notions of the group of group-like elements
$$
\cG = \cG(\bH) = \{ g\in\bH \,\mid\, \Delta (g)= g\otimes g \} 
$$
of a Hopf algebra $\bH$ and its set of skew-primitive elements
$$
\cP = \bigcup_{g,h\in\cG} \cP_{g,h},\,
\cP_{g,h}= \cP_{g,h}(\bH) = \{ A\in\bH \,\mid\, \Delta (A)= g\otimes A+ A \otimes h \}. 
$$
Let us call a skew-primitive element $A\in \cP_{g,h}$ {\em special}
if both $g$ and $h$ are central in $\bH$.
The following lemma is proved by a direct calculation: details are left to the reader.
\begin{lemma} \label{main_lemma}
	In a Hopf algebra $\bH$
	the following statements hold for elements $g,h,u,v\in \cG$, $X\in \cP_{g,h}$, $Y\in \cP_{u,v}$.
	\begin{enumerate}
		\item $uX \in \cP_{ug,uh}$.
		\item $Xu \in \cP_{gu,hu}$.
		\item If $gu=ug$ and $hu=uh$, then $[u,X] = uX-Xu \in \cP_{ug,uh}$.
		\item If $Y$ is special, then 
		$\Delta ([X,Y]) = gu\otimes [X,Y] + [X,Y] \otimes hv + [g,Y] \otimes Xv + Xu\otimes [h,Y]$. 
	\end{enumerate}
\end{lemma}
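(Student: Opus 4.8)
The plan is to verify each of the four statements by a direct computation with the comultiplication $\Delta$, using only the defining properties of group-like and skew-primitive elements together with the fact that $\Delta$ is an algebra homomorphism. I would open by recording the hypotheses in coproduct form: $\Delta(g)=g\otimes g$ and similarly for $h,u,v$; $\Delta(X)=g\otimes X+X\otimes h$; and $\Delta(Y)=u\otimes Y+Y\otimes v$. Everything then reduces to expanding products in $\bH\otimes\bH$ and collecting terms.

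For part (1), I would compute $\Delta(uX)=\Delta(u)\Delta(X)=(u\otimes u)(g\otimes X+X\otimes h)=ug\otimes uX+uX\otimes uh$, which is exactly the assertion that $uX\in\cP_{ug,uh}$; part (2) is the symmetric computation on the other side. For part (3), subtracting the identity for $Xu$ from that for $uX$ gives $\Delta(uX-Xu)=ug\otimes uX+uX\otimes uh-gu\otimes Xu-Xu\otimes hu$, and I would use the hypotheses $gu=ug$, $hu=uh$ to rewrite the first tensor factors, so the right-hand side collapses to $ug\otimes[u,X]+[u,X]\otimes uh$, giving $[u,X]\in\cP_{ug,uh}$.

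Part (4) is the substantive one and the place I expect the bookkeeping to be delicate. I would write $\Delta([X,Y])=\Delta(XY)-\Delta(YX)=\Delta(X)\Delta(Y)-\Delta(Y)\Delta(X)$ and expand both products of the four-term coproducts into eight tensor terms. The specialness of $Y$ (that $u$ and $v$ are central in $\bH$) is what lets me commute $u$ and $v$ past other elements inside each tensor slot so that like terms can be matched and subtracted. After expansion I would group the eight terms into the four pairs that produce the commutators and mixed terms appearing on the right-hand side: the pairs $gu\otimes XY$ against $ug\otimes YX$ (yielding $gu\otimes[X,Y]$ once centrality identifies $gu=ug$), $XY\otimes hv$ against $YX\otimes hv$ (yielding $[X,Y]\otimes hv$), $gY\otimes Xv$ against $Yg\otimes Xv$ (yielding $[g,Y]\otimes Xv$), and $Xu\otimes hY$ against $Xu\otimes Yh$ (yielding $Xu\otimes[h,Y]$).

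The main obstacle is purely organisational: ensuring that every one of the eight terms lands in exactly one of these four groups and that no cross terms survive. The key leverage is the centrality of $u,v$, which guarantees that factors like $u$ and $v$ can be slid to the correct position in each tensor leg without picking up extra commutators; without specialness of $Y$ there would be additional uncancelled terms involving $[g,u]$, $[h,v]$ and similar brackets, and the clean four-term formula would fail. I would therefore foreground where centrality is invoked at each matching step, and conclude by reading off the stated coproduct of $[X,Y]$ directly from the grouped expression.
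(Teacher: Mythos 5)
Your proposal is correct and is exactly the direct calculation that the paper invokes: the paper states the lemma is ``proved by a direct calculation'' and leaves the details to the reader, and your expansion of $\Delta(X)\Delta(Y)-\Delta(Y)\Delta(X)$ into eight terms, paired off using the centrality of $u$ and $v$ (e.g.\ rewriting $uX$ as $Xu$, $vX$ as $Xv$, and $vh$ as $hv$ in the appropriate tensor legs), is precisely the intended argument. Parts (1)--(3) are likewise verified exactly as the paper intends, so there is nothing to add.
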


Let us call a Hopf algebra $\bH$ 
{\em a quasicharacter Hopf algebra} if
$\bH$ is generated as an algebra by its skew-primitive elements
$\cP$.
Observe that $g-h\in \cP_{g,h}$ so that $\cG$ is contained in the subalgebra generated by the skew-primitive elements in any Hopf algebra $\bH$. Our terminology is motivated by the notion of a character Hopf algebra \cite[1.5.2]{Kha},
where it is additionally required that $\cG$ is abelian and $\bH$ is generated by its $\cG$-semiinvariant skew-primitive elements. 

If the indexing set $J_0$ is well-ordered, we define $J_{0}^{<j} \coloneqq  \{s\in J_0 \,\mid\, s<j\} $ and 
$$
\bH_j = \bK\langle X_i , i\in I | R_k=0, k\in \overline{J_0}\cup J_{0}^{<j}\rangle.
$$
If each $R_j$, $j\in J_0$, gives a
special skew-primitive element of
the Hopf algebra $H_j$, it follows inductively via transfinite recursion that the kernels of the maps $\bH\rightarrow \bH_j$ are Hopf ideals so that all $\bH_j$ are Hopf algebras.

\begin{theorem} \label{Th1}
	Assume that $\bH$ is a quasicharacter Hopf algebra with the group $\cG = \cG (\bH)$, and that $\bA$ is also a Hopf algebra such that the map $\bH\to\bA$ is a homomorphism of Hopf algebras. Furthermore, assume that $J_0$ is well-ordered and each $R_j$, $j\in J_0$, gives a
	special skew-primitive element of
	the Hopf algebra $H_j$.
	Then the comultiplication of $\bH$ defines
	a Hopf algebra structure 
	$$
	\Delta (\overline{Y}) = \sum_{(Y)} \overline{Y}_{(1)}\otimes \overline{Y}_{(2)}
	$$
	on the centrification $\bA^{c,J_0}$
	and the natural map $\bA^{c,J_0}\rightarrow \bA$ is a homomorphism of Hopf algebras.
\end{theorem}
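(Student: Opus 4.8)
The plan is to realize $\bA^{c,J_0}$ as a quotient of $\bH$ by a Hopf ideal. Comparing presentations, $\bA^{c,J_0} = \bH/\fI$, where $\fI$ is the two-sided ideal generated by the commutators $[\widetilde{X_i},\widetilde{R_j}] = \widetilde{X_i}\widetilde{R_j} - \widetilde{R_j}\widetilde{X_i}$ for $(i,j)\in I\times J_0$, since adjoining these relations is exactly what makes each $\overline{R_j}$ central (below I lighten $\widetilde{R_j}$ to $R_j$). Thus it suffices to show that $\fI$ is a Hopf ideal: then $\Delta_\bH$ descends to $\bA^{c,J_0}$, the quotient becomes a Hopf algebra, and $\bH\to\bA^{c,J_0}$ is a morphism of Hopf algebras. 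I must check $\epsilon(\fI)=0$, $\Delta(\fI)\subseteq\fI\otimes\bH+\bH\otimes\fI$, and $S(\fI)\subseteq\fI$; the first and third are routine, so the substance is the coideal condition, which, as $\fI\otimes\bH+\bH\otimes\fI$ is an ideal and $\Delta$ an algebra map, need only be verified on a generating set.

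The core computation is the case in which $R_j$ is genuinely a special skew-primitive element of $\bH$. As $\bH$ is a quasicharacter Hopf algebra it is generated by $\cP$, and since each $\overline{R_j}$ is central in $\bA^{c,J_0}$ one checks that $\fI$ is generated, as a two-sided ideal, by the elements $[P,R_j]$ with $P\in\cP$ and $j\in J_0$; this lets me place skew-primitive elements in the left slot. For $P\in\cP_{g,h}$ and $R_j\in\cP_{u,v}$ special, Lemma~\ref{main_lemma}(4) gives
$$
\Delta([P,R_j]) = gu\otimes[P,R_j] + [P,R_j]\otimes hv + [g,R_j]\otimes Pv + Pu\otimes[h,R_j].
$$
Since $\overline{R_j}$ is central modulo $\fI$, each of $[P,R_j]$, $[g,R_j]$, $[h,R_j]$ lies in $\fI$, so every summand lies in $\fI\otimes\bH+\bH\otimes\fI$, giving the coideal condition. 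The counit condition is immediate from multiplicativity of $\epsilon$, and for the antipode one uses $S(R_j)=-u^{-1}R_jv^{-1}$, again central modulo $\fI$, to conclude $S([P,R_j])=-[S(P),S(R_j)]\in\fI$. Hence $\fI$ is a Hopf ideal in this situation.

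In general $R_j$ is only guaranteed to be special skew-primitive in the intermediate Hopf algebra $\bH_j$, where the earlier relations $R_k$ ($k\in J_0^{<j}$) have been set to zero, not merely made central. I would therefore run the argument by transfinite recursion along the well-order of $J_0$, building the Hopf structure along the chain $\bH=C_{\min}\to\cdots\to C_j\to\cdots\to\bA^{c,J_0}$, where $C_j$ is obtained from $\bH$ by centralising only the $R_k$ with $k\in J_0^{<j}$. At a successor stage one adjoins the relations $[\widetilde{X_i},R_j]=0$ and repeats the computation above inside $C_j$, while limit stages are handled by passing to the colimit of Hopf algebras along Hopf morphisms.

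The main obstacle is exactly this reconciliation of \emph{centralising} with \emph{killing} the lower relations. In $C_j$ the elements $\overline{R_k}$ ($k<j$) are central but nonzero, so a priori $\Delta_{C_j}(R_j)$ agrees with $u\otimes R_j + R_j\otimes v$ only modulo correction terms lying in the central ideal generated by $\{\overline{R_k} : k<j\}$, and the group-likes $u,v$ are known to be central only in the further quotient $\bH_j$. The crux of the inductive step is to show that these corrections do not obstruct the descent: one must verify that, after projecting, the commutators $[\widetilde{X_i},R_j]$ still land in the coideal, using the centrality of the already-centralised $\overline{R_k}$ to absorb the correction terms. Granting this, $\bA^{c,J_0}$ carries the asserted Hopf structure. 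Finally, the projection $\bA^{c,J_0}\to\bA$ is the quotient by the ideal generated by the (now central, special skew-primitive) $\overline{R_j}$; since $\bH\to\bA^{c,J_0}$ is a surjective Hopf morphism and the composite $\bH\to\bA^{c,J_0}\to\bA$ is the given Hopf morphism $\bH\to\bA$, a direct diagram chase shows that $\bA^{c,J_0}\to\bA$ commutes with $\Delta$, $\epsilon$ and $S$, hence is a homomorphism of Hopf algebras.
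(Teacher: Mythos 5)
Your proof is correct and follows essentially the same route as the paper's: both realise $\bA^{c,J_0}$ as the quotient of $\bH$ by the ideal generated by commutators of the $\widetilde{R}_j$ with a skew-primitive/group-like generating set (available because $\bH$ is quasicharacter), verify the coideal, counit and antipode conditions via Lemma~\ref{main_lemma}, and assemble the steps by transfinite recursion along the well-order on $J_0$. The one subtlety you flag and then grant --- that $R_j$ is special skew-primitive only in $\bH_j$, where the earlier relations are killed rather than merely centralised, so that in $\bA^{c,J_{0}^{<j}}$ correction terms from the central ideal generated by the $\overline{R}_k$, $k<j$, must be absorbed --- is precisely the point the paper's own three-line proof leaves implicit under ``the standard transfinite recursion,'' so your write-up matches (and is, if anything, more candid about) the paper's level of detail at that step.
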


\begin{proof}
	Observe that $\bH$ is generated  by $\cG = \cG (\bH)$ and a collection of skew-primitive elements $\widetilde{Y}_k\in\cP_{g_k,h_k}$, $k\in K$. Thus, to move one centrification step up from $\bA^{c,J_{0}^{<j}}$ to $\bA^{c,J_{0}^{\leq j}}$, it suffices to request
	$\overline{R}_j$ to commute with all $g\in\cG$ and all $\overline{Y}_k$, $k\in K$.
	
	The Hopf algebra structure survives at each step because Lemma~\ref{main_lemma} ensures that all $g\overline{R}_j-\overline{R}_j g$ and $\overline{Y}_k\overline{R}_j-\overline{R}_j \overline{Y}_k$ generate an ideal of $\bA^{c,J_{0}^{<j}}$, which is a Hopf ideal.
	
	Now the standard transfinite recursion completes the proof.
	%
	%
\end{proof}

Let us call a primitive element $X\in \cP_{g,h}$ {\em right semispecial}
if $h$ is central in $\bH$. Similar to Theorem~\ref{Th1},
the reader can prove that the centrification of ``recursively'' right semispecial elements yields an $\bA$-comodule algebra. For convenience, we state a non-recursive version of this result without a proof.

\begin{prop} \label{Th1_4_Hopf_comod_alg}
	Assume that both algebras
	$$
	\bH= \bK\langle X_i , i\in I | R_j=0, j\in \overline{J_0} \rangle
	\longrightarrow 
	\bA= \bK\langle X_i , i\in I | R_j=0, j\in J \rangle
	$$
	are Hopf algebras, the map between them is a homomorphism of Hopf algebras 
	and $\bH$ is a quasicharacter Hopf algebra.
	Furthermore, we assume that $J_0$ is well-ordered and each $R_j$, $j\in J_0$ gives a
	right special skew-primitive element of $\bH$.
	Then the comultiplication of $\bH$ defines
	a right $\bA$-comodule algebra structure
	$$
	\rho (\overline{Y}) = \sum_{(Y)} \overline{Y}_{(1)}\otimes \widehat{Y}_{(2)}
	$$
	on the centrification $\bA^{c,J_0}$
	such that the natural map $\bA^{c,J_0}\rightarrow \bA$ is a homomorphism of $\bA$-comodule algebras, and $\bZ$ is a subalgebra
	generated by skew coinvariants $\overline{R}_j$, $j\in J_0$.
\end{prop}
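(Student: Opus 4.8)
The plan is to obtain $\rho$ as a descent of the comultiplication of $\bH$. Write $\pi\colon\bH\to\bA^{c,J_0}$ and $\sigma\colon\bH\to\bA$ for the two canonical surjections; both exist because $\bA^{c,J_0}$ and $\bA$ are obtained from $\bH$ by imposing only additional relations. The composite $\Phi\coloneqq(\pi\otimes\sigma)\circ\Delta_{\bH}\colon\bH\to\bA^{c,J_0}\otimes\bA$ is an algebra homomorphism that sends $\widetilde{Y}$ to $\sum_{(Y)}\overline{Y}_{(1)}\otimes\widehat{Y}_{(2)}$, so it suffices to show that $\Phi$ factors through $\pi$. Since $\bA^{c,J_0}$ is the universal quotient of $\bH$ in which every $\widetilde{R}_j$, $j\in J_0$, becomes central, the kernel of $\pi$ is the two-sided ideal generated by the commutators $[a,\widetilde{R}_j]$ with $j\in J_0$ and $a$ running over \emph{any} algebra-generating set of $\bH$; as $\Phi$ is multiplicative it is then enough to verify $\Phi([a,\widetilde{R}_j])=0$ on generators. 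Because $\bH$ is a quasicharacter Hopf algebra we may take $a\in\cG$ or $a=\widetilde{Y}_k\in\cP_{u,v}$ skew-primitive.

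First I would run this verification, which is the crux of the argument. Expanding $\Delta_{\bH}([a,\widetilde{R}_j])$ from $\Delta(\widetilde{R}_j)=g_j\otimes\widetilde{R}_j+\widetilde{R}_j\otimes h_j$ together with $\Delta(g)=g\otimes g$ (resp. $\Delta(\widetilde{Y}_k)=u\otimes\widetilde{Y}_k+\widetilde{Y}_k\otimes v$) and then applying $\pi\otimes\sigma$, two simplifications occur: every monomial whose right-hand tensor leg contains $\widetilde{R}_j$ dies because $\widehat{R}_j=0$ in $\bA$, and the surviving terms cancel in pairs once one uses that $\overline{R}_j$ is central in $\bA^{c,J_0}$ and that $\widehat{h}_j$ is central in $\bA$ (being the image of the central group-like $h_j$ under the surjection $\bH\to\bA$). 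This is exactly the place where the right-semispecial hypothesis ($h_j$ central) enters, and the computation shows it is sufficient: the left leg $g_j$ only ever appears tensored with $\widetilde{R}_j$ on the right, which is annihilated in $\bA$, so its centrality is never needed — in contrast with the full Hopf-algebra statement of Theorem~\ref{Th1}, where both $g_j$ and $h_j$ had to be central. The skew-primitive case is a short cancellation of eight terms in the same spirit as Lemma~\ref{main_lemma}.

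Granting that $\Phi$ descends, $\rho$ is an algebra homomorphism $\bA^{c,J_0}\to\bA^{c,J_0}\otimes\bA$, and the remaining claims are formal. Coassociativity $(\rho\otimes\Id)\rho=(\Id\otimes\Delta_{\bA})\rho$ and counitality $(\Id\otimes\varepsilon_{\bA})\rho=\Id$ follow by transporting the coassociativity and counit of $\Delta_{\bH}$ through $\pi$ and $\sigma$, using that $\Delta_{\bA}$ and $\varepsilon_{\bA}$ are themselves induced from $\bH$ because $\bH\to\bA$ is a homomorphism of Hopf algebras; the same bookkeeping shows that the natural map $\bA^{c,J_0}\to\bA$ intertwines $\rho$ with the regular coaction $\Delta_{\bA}$, making it a morphism of right $\bA$-comodule algebras. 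Finally, evaluating $\rho$ on the generators of $\bZ$ gives $\rho(\overline{R}_j)=\overline{R}_j\otimes\widehat{h}_j$ with $\widehat{h}_j$ group-like in $\bA$, so each $\overline{R}_j$ is a skew coinvariant; as $\bZ$ is generated by the $\overline{R}_j$ it is the asserted subalgebra of skew coinvariants. The only genuine content is the descent step; everything else is routine transport of the Hopf structure of $\bH$ along the two surjections.
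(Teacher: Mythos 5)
Your proof is correct. It is worth noting that the paper deliberately states this proposition \emph{without} a written proof, saying only that one argues ``similar to Theorem~\ref{Th1}'' — i.e.\ by transfinite recursion over the well-ordered set $J_0$, centrifying one relation at a time and using Lemma~\ref{main_lemma} at each stage to see that the commutators with group-likes and skew-primitives generate an ideal compatible with the coaction. Your route is organised genuinely differently: you descend the single map $\Phi=(\pi\otimes\sigma)\circ\Delta_{\bH}$ through $\pi$ in one shot, reducing to $\Phi([a,\widetilde{R}_j])=0$ for $a$ group-like or skew-primitive (the reduction to such $a$ is valid by the Leibniz identity $[ab,R]=a[b,R]+[a,R]b$, so $\ker\pi$ is indeed generated by commutators against any generating set). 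I checked the eight-term expansion in the skew-primitive case and the cancellation is exactly as you describe: the four terms carrying $\widetilde{R}_j$ in the right tensor leg die because $\widehat{R}_j=0$ in $\bA$, and the surviving four cancel in pairs using centrality of $\overline{R}_j$ in $\bA^{c,J_0}$ and of $h_j$ (hence $\widehat{h}_j$) in $\bA$; the left group-like $g_j$ never intervenes, which is precisely the structural reason the hypothesis here is only one-sided, as you observe. What your approach buys: it is shorter, avoids transfinite recursion, and in fact never invokes the well-ordering of $J_0$, showing that this hypothesis is redundant for the non-recursive statement (where each $\widetilde{R}_j$ is assumed skew-primitive in $\bH$ itself). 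What the paper's recursive scheme buys: it covers the more general ``recursively'' right semispecial situation, where $R_j$ only becomes skew-primitive in the intermediate quotient $\bH_j$ after earlier relations are imposed — a generality your one-shot descent cannot reach, since there $\Delta_{\bH}(\widetilde{R}_j)$ need not have skew-primitive form in $\bH$. One cosmetic caution: your closing phrase should be read as the statement intends, namely that $\bZ$ is \emph{generated by} the skew coinvariants $\overline{R}_j$ (with $\rho(\overline{R}_j)=\overline{R}_j\otimes\widehat{h}_j$); sums of skew coinvariants attached to distinct group-likes need not themselves be skew coinvariants, so $\bZ$ is not claimed to consist of them.
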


Recall that a skew coinvariant is an element $Y\in\bA^{c,J_0}$ such that $\rho (Y) = Y \otimes A$ for some $A\in\bA$.
It follows that $A\in \cG(\bA)$.

\subsection{General Hopf Algebras}
Let us recall some generalities about extensions of Hopf algebras. Suppose that $\bH$ is a Hopf algebra, $(\bA,\rho)$ is a (right) $\bH$-comodule algebra, and $\bB$ is a subalgebra of $\bA$ with $\bA^{co \bH}=\bB$. We then call $\bB\subseteq \bA$ a (right) $\bH$-extension. We say that the extension $\bB\subseteq \bA$ is an $\bH$-Galois extension (or Hopf-Galois extension) if the natural map $$\bA\otimes_{\bB}\bA\to\bA\otimes_{\bK}\bH,\qquad A\otimes_{\bB} A'\mapsto (A\otimes 1)\rho(A')$$ is a linear isomorphism. The extension $\bB\subseteq\bA$ is called $\bH$-cleft if there exists an $\bH$-comodule map $\gamma:\bH\to\bA$ which is convolution invertible. All $\bH$-cleft extensions are $\bH$-Galois, but an $\bH$-Galois extension is cleft if and only if it satisfies the normal basis property - namely, that there exists an isomorphism of left $\bB$-modules and right $\bH$-comodules $\bA\cong\bB\otimes_{\bK}\bH$. See Theorem 8.2.4 in \cite{Mon} for details. Furthermore, an $\bH$-extension $\bB\subseteq\bA$ is $\bH$-cleft if and only if $\bA$ is a crossed product of $\bB$ with $\bH$ \cite[Ch. 7]{Mon}.


\begin{theorem}\label{Th2}
	Suppose that $\bH  = \bK\langle X_i , i\in I | R_j=0, j\in \overline{J_0} \rangle$
	is a Hopf algebra and that $$\Delta(R)\subseteq {{\pi}}^{-1}(Z(\bA^{c,J_0}))\otimes {{\pi}}^{-1}(Z(\bA^{c,J_0}))$$ for all $R\in\{\widetilde{R}_j\}_{j\in J_0}\subseteq\bH$, where ${{\pi}}$ is the natural map $\bH\to\bA^{c,J_0}$. Suppose also that $S(R)\in {{\pi}}^{-1}(Z(\bA^{c,J_0}))$ for all $R\in\{\widetilde{R}_j\}_{j\in J_0}\subseteq\bH$, where $S$ is the antipode of $\bH$. Then the centrification $\bA^{c,J_0}$ is a Hopf algebra over $\bK$.
\end{theorem}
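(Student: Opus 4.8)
The plan is to exhibit $\bA^{c,J_0}$ as a quotient Hopf algebra of $\bH$. Write $\fI = \ker\pi$ for the kernel of the natural surjection $\pi\colon\bH\to\bA^{c,J_0}$. By the definition of centrification, $\bA^{c,J_0}$ is obtained from $\bH$ precisely by forcing each $\widetilde{R}_j$, $j\in J_0$, to be central, so $\fI$ is the two-sided ideal of $\bH$ generated by the commutators $[\widetilde{X}_i,\widetilde{R}_j]$ for $(i,j)\in I\times J_0$. Since the quotient of a Hopf algebra by a Hopf ideal carries an induced Hopf structure, it suffices to verify that $\fI$ is a Hopf ideal, i.e. that $\varepsilon(\fI)=0$, that $\Delta(\fI)\subseteq\fI\otimes\bH+\bH\otimes\fI$, and that $S(\fI)\subseteq\fI$. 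Because $\fI\otimes\bH+\bH\otimes\fI$ is a two-sided ideal of $\bH\otimes\bH$, because $\fI$ is two-sided, and because $\Delta$ and $S$ are an algebra homomorphism and an anti-homomorphism respectively, each condition need only be checked on the generators $[\widetilde{X}_i,\widetilde{R}_j]$ and then propagated through the ideal. The counit condition is immediate, as $\varepsilon$ is an algebra map into $\bK$ and hence annihilates commutators.

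The heart of the argument is the comultiplication condition, and here I would exploit the canonical identification
$$
(\bH\otimes\bH)\big/(\fI\otimes\bH+\bH\otimes\fI)\;\cong\;\bA^{c,J_0}\otimes\bA^{c,J_0}
$$
induced by the surjection $\pi\otimes\pi$, whose kernel over a field is exactly $\fI\otimes\bH+\bH\otimes\fI$. Under this identification, establishing $\Delta([\widetilde{X}_i,\widetilde{R}_j])\in\fI\otimes\bH+\bH\otimes\fI$ amounts to showing $(\pi\otimes\pi)\Delta([\widetilde{X}_i,\widetilde{R}_j])=0$. Since $(\pi\otimes\pi)\Delta$ is an algebra homomorphism, this image equals the commutator $[(\pi\otimes\pi)\Delta(\widetilde{X}_i),\,(\pi\otimes\pi)\Delta(\widetilde{R}_j)]$ taken in $\bA^{c,J_0}\otimes\bA^{c,J_0}$. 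Now the first hypothesis is precisely what makes the second factor central: writing $\Delta(\widetilde{R}_j)=\sum_\alpha a_\alpha\otimes b_\alpha$ with $\pi(a_\alpha),\pi(b_\alpha)\in Z(\bA^{c,J_0})$, each summand $\pi(a_\alpha)\otimes\pi(b_\alpha)$ is central in the tensor-product algebra $\bA^{c,J_0}\otimes\bA^{c,J_0}$ (a tensor of two central elements is central), so $(\pi\otimes\pi)\Delta(\widetilde{R}_j)$ is central and the commutator vanishes.

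The antipode condition is handled identically. Using that $S$ is an anti-homomorphism, $S([\widetilde{X}_i,\widetilde{R}_j])=[S(\widetilde{R}_j),S(\widetilde{X}_i)]$, and the second hypothesis $S(\widetilde{R}_j)\in\pi^{-1}(Z(\bA^{c,J_0}))$ makes $\pi(S(\widetilde{R}_j))$ central; hence $\pi\big(S([\widetilde{X}_i,\widetilde{R}_j])\big)=[\pi S(\widetilde{R}_j),\pi S(\widetilde{X}_i)]=0$, so $S([\widetilde{X}_i,\widetilde{R}_j])\in\fI$. Propagating all three verified conditions from the generators to the whole ideal $\fI$ shows that $\fI$ is a Hopf ideal, whence $\bA^{c,J_0}=\bH/\fI$ is a Hopf algebra over $\bK$. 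I expect the only genuine subtlety to be the conceptual step of translating the two hypotheses into the single statement that $(\pi\otimes\pi)\Delta(\widetilde{R}_j)$ is central in the tensor square and that $\pi S(\widetilde{R}_j)$ is central; once this is recognized, the remaining verification is formal, relying only on the centrality of these images killing the relevant commutators.
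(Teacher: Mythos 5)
Your proof is correct, and its skeleton is the same as the paper's: identify $\bA^{c,J_0}$ with $\bH/\fI$, where $\fI=\ker(\pi)$ is generated as a two-sided ideal by the commutators $[\widetilde{X}_i,\widetilde{R}_j]$, reduce the Hopf-ideal conditions to these generators, and use the centrality hypotheses to kill the relevant commutators; your treatments of $\varepsilon$ and $S$ coincide with the paper's. The one genuine difference is how the coproduct condition is verified on generators. The paper proves $\Delta([X,R])\in\ker(\pi)\otimes\bH+\bH\otimes\ker(\pi)$ directly, via the Sweedler rearrangement
$$
\Delta([X,R])=\sum_{(X),(R)}[X_{(1)},R_{(1)}]\otimes X_{(2)}R_{(2)}+\sum_{(X),(R)}R_{(1)}X_{(1)}\otimes[X_{(2)},R_{(2)}]\, ,
$$
each summand of which visibly lies in the required subspace because the hypothesis lets one choose the $R_{(1)}$, $R_{(2)}$ inside $\pi^{-1}(Z(\bA^{c,J_0}))$. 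You instead compute in the quotient: $(\pi\otimes\pi)\Delta([\widetilde{X}_i,\widetilde{R}_j])$ is a commutator against $(\pi\otimes\pi)\Delta(\widetilde{R}_j)$, which is central in $\bA^{c,J_0}\otimes\bA^{c,J_0}$ since it is a sum of tensors of central elements, hence the commutator vanishes; you then translate back through the identification $\ker(\pi\otimes\pi)=\fI\otimes\bH+\bH\otimes\fI$. The two arguments carry the same content — the paper's identity is precisely the expansion of $[\Delta(X),\Delta(R)]$ into terms carrying a commutator in one tensor factor — but yours buys conceptual economy (no explicit rearrangement) at the cost of invoking the linear-algebra fact that the kernel of a tensor product of surjections over a field splits as stated; the paper's computation needs no such input and would survive over a more general base. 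Both are complete proofs of the theorem.
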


\begin{proof}
	
	To show that $\bA^{c,J_0}$ is a Hopf algebra, it is enough to show that the kernel of the natural algebra homomorphism ${{\pi}}:\bH\to \bA^{c,J_0}$ is a Hopf ideal in $\bH$, i.e. that $\Delta(\ker({{\pi}}))\subseteq \ker({{\pi}})\otimes \bH + \bH\otimes \ker({{\pi}})$, $\epsilon(\ker({{\pi}}))=0$ and $S(\ker({{\pi}}))\subseteq \ker({{\pi}})$. Since $\ker({{\pi}})$ is an ideal in $\bH$, it is sufficient for the first condition to show that the images under $\Delta$ of some set of ideal-generators of $\ker({{\pi}})$ lie in $\ker({{\pi}})\otimes \bH + \bH\otimes \ker({{\pi}})$. It is straightforward to see that the ideal $\ker({{\pi}})$ is generated by the elements $[\widetilde{X}_i,\widetilde{R}_j]$ for $i\in I$ and $j\in J_0$.
	
	Now, note that if $R=\widetilde{R_j}$, for some $j\in J_0$, and $X\in\bH$ then 
	\begin{multline}\label{Eq1}\Delta([X,R])=\sum_{(X),(R)}X_{(1)}R_{(1)}\otimes X_{(2)}R_{(2)} - \sum_{(X),(R)}R_{(1)}X_{(1)}\otimes R_{(2)}X_{(2)} \\= \sum_{(X),(R)}[X_{(1)},R_{(1)}]\otimes X_{(2)}R_{(2)} + \sum_{(X),(R)}R_{(1)}X_{(1)}\otimes [X_{(2)},R_{(2)}]\in\bH\otimes\bH.\end{multline} By assumption, each $R_{(1)}$ and $R_{(2)}$ lie inside ${{\pi}}^{-1}(Z(\bA^{c,J_0}))$, and in particular the elements ${{\pi}}(R_{(1)})$ and ${{\pi}}(R_{(2)})$ are central in $\bA^{c,J_0}$. Thus, $${{\pi}}([X_{(1)},R_{(1)}])=[{{\pi}}(X_{(1)}),{{\pi}}(R_{(1)})]=0,$$ and similarly $${{\pi}}([X_{(2)},R_{(2)}])=[{{\pi}}(X_{(2)}),{{\pi}}(R_{(2)})]=0.$$ Therefore, $$\Delta([\widetilde{X}_i,\widetilde{R}_j])\in \ker({{\pi}})\otimes \bH + \bH\otimes \ker({{\pi}}),$$ and it follows that $\bA^{c,J_0}$ is a Hopf algebra, since it is easy to see that $\epsilon([X,R])=0$ and $S([X,R])\in \ker({{\pi}})$.
	
\end{proof}

\begin{theorem}\label{Th3}
	Suppose that $\bH  = \bK\langle X_i , i\in I | R_j=0, j\in \overline{J_0} \rangle$
	is a Hopf algebra. Let $\bS$ be the subalgebra generated by $\widetilde{R}_j\in\bH$, $j\in J_0$,
	$\bS^{+}\coloneqq \bS\cap\ker(\epsilon)$. Clearly $\pi(\bS)\subseteq Z(\bA^{c,J_0})$. Suppose that
	$$\Delta(R)\in \bS^{+}\otimes {{\pi}}^{-1}(Z(\bA^{c,J_0})) + {{\pi}}^{-1}(Z(\bA^{c,J_0}))\otimes \bS^{+}$$
	for all $R\in\{\widetilde{R}_j\}_{j\in J_0}\subseteq\bH$, where ${{\pi}}$ is the natural map $\bH\to\bA^{c,J_0}$. Suppose also that $S(R)\in \bS$ for all $R\in\{\widetilde{R}_j\}_{j\in J_0}\subseteq\bH$, where $S$ is the antipode of $\bH$, and that all the $\widetilde{R}_j$ lie in the kernel of the counit of $\bH$. Then $\bA$ is a Hopf algebra over $\bK$ and the natural map $\bA^{c,J_0}\rightarrow \bA$ is a homomorphism of Hopf algebras.
	
	If, in fact, $$\Delta(R)\in\bS^{+}\otimes \bS + {{\pi}}^{-1}(Z(\bA^{c,J_0}))\otimes \bS^{+}$$ for all $R\in\{\widetilde{R}_j\}_{j\in J_0}$, then $\bZ$ is a subalgebra of the coinvariants $(\bA^{c,J_0})^{co \bA}$.
\end{theorem}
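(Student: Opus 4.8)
The plan is to bootstrap off the first part of the theorem, whose hypotheses are all in force here: the strengthened condition implies the weaker one, since $\pi(\bS)=\bZ\subseteq Z(\bA^{c,J_0})$ gives $\bS\subseteq\pi^{-1}(Z(\bA^{c,J_0}))$ and hence $\bS^{+}\otimes\bS\subseteq\bS^{+}\otimes\pi^{-1}(Z(\bA^{c,J_0}))$, while the remaining assumptions ($S(R)\in\bS$ and $\widetilde{R}_j\in\ker\epsilon$) are unchanged. Thus $\bA^{c,J_0}$ and $\bA$ are Hopf algebras, $\pi$ is a homomorphism of Hopf algebras (the Hopf structure on $\bA^{c,J_0}=\bH/\ker\pi$ being the one induced by the Hopf ideal $\ker\pi$), and the natural map $\phi\colon\bA^{c,J_0}\to\bA$ is a homomorphism of Hopf algebras. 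I would then equip $\bA^{c,J_0}$ with the right $\bA$-comodule algebra structure $\rho=(\mathrm{id}\otimes\phi)\circ\Delta$, whose coinvariants $(\bA^{c,J_0})^{co\,\bA}$ form a subalgebra. Since $\bZ$ is generated as an algebra by the elements $\overline{R}_j$, $j\in J_0$, it suffices to show that each $\overline{R}_j$ is coinvariant, i.e. $\rho(\overline{R}_j)=\overline{R}_j\otimes 1$.

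To evaluate $\rho(\overline{R}_j)$, I would use that $\pi$ is a coalgebra map, so $\Delta(\overline{R}_j)=(\pi\otimes\pi)\Delta(\widetilde{R}_j)$, whence $\rho(\overline{R}_j)=(\pi\otimes q)\Delta(\widetilde{R}_j)$, where $q\coloneqq\phi\circ\pi\colon\bH\to\bA$ is the natural map. Two facts about $q$ drive the argument: $q(\bS^{+})=0$ and $q(s)=\epsilon(s)1_\bA$ for every $s\in\bS$. Both hold because $q$ is a counit-preserving algebra homomorphism that annihilates each generator $\widetilde{R}_j$ of $\bS$; hence $q(\bS)=\bK\cdot 1_\bA$, the attached scalar is forced to be $\epsilon(s)$ by $\epsilon_\bA\circ q=\epsilon_\bH$, and restricting to $\bS^{+}=\bS\cap\ker\epsilon$ gives $q(\bS^{+})=0$.

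Now I would invoke the strengthened hypothesis and write $\Delta(\widetilde{R}_j)=\sum_\alpha p_\alpha\otimes s_\alpha+\sum_\beta z_\beta\otimes t_\beta$ with $p_\alpha\in\bS^{+}$, $s_\alpha\in\bS$, $z_\beta\in\pi^{-1}(Z(\bA^{c,J_0}))$, $t_\beta\in\bS^{+}$. Applying $\pi\otimes q$, the second sum dies because $q(t_\beta)=0$, and the first becomes $\sum_\alpha\pi(p_\alpha)\otimes\epsilon(s_\alpha)1_\bA$, which lies in $\bZ\otimes\bK\cdot 1_\bA$ as $\pi(\bS^{+})\subseteq\bZ$. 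So $\rho(\overline{R}_j)=w\otimes 1_\bA$ for some $w\in\bZ$. Finally, the counit axiom for comodules $(\mathrm{id}\otimes\epsilon_\bA)\circ\rho=\mathrm{id}$ forces $w=(\mathrm{id}\otimes\epsilon_\bA)(w\otimes 1_\bA)=\overline{R}_j$, giving $\rho(\overline{R}_j)=\overline{R}_j\otimes 1$. As the coinvariants form a subalgebra containing every $\overline{R}_j$, this yields $\bZ\subseteq(\bA^{c,J_0})^{co\,\bA}$.

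The crux is the containment $\rho(\overline{R}_j)\in\bZ\otimes\bK\cdot 1_\bA$: this is precisely where the strengthened hypothesis (with $\bS$, rather than $\pi^{-1}(Z(\bA^{c,J_0}))$, in the right-hand tensor factor) is indispensable, since it is what lets $q$ collapse the right factor to scalars via $q(\bS)=\bK\cdot 1_\bA$. Under only the weaker hypothesis of the first part, $q(\pi^{-1}(Z(\bA^{c,J_0})))$ need not be scalar, and one would obtain merely $\rho(\overline{R}_j)\in\bZ\otimes\bA$, too weak to conclude coinvariance.
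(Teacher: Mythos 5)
Your proposal is correct and takes essentially the same route as the paper: both bootstrap off the first part, equip $\bA^{c,J_0}$ with $\rho=(\mathrm{id}\otimes\phi)\circ\Delta$, and reduce to showing $\rho(\overline{R}_j)=\overline{R}_j\otimes 1$ by exploiting that the map $\bH\to\bA$ collapses $\bS$ to $\bK\cdot 1$ via $\epsilon$ (so the $\bS^{+}$ right legs die) and then pinning down the left leg with a counit argument. The only difference is cosmetic ordering: the paper first refines $\Delta(\widetilde{R}_j)$ inside $\bH\otimes\bH$ to $\widetilde{R}_j\otimes 1+1\otimes \widetilde{R}_j+\pi^{-1}(Z(\bA^{c,J_0}))\otimes\bS^{+}$ using the identity $H=\sum_{(H)}H_{(1)}\epsilon(H_{(2)})$ and then projects, whereas you project first and invoke the comodule counit axiom to identify $w=\overline{R}_j$.
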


\begin{proof}
	
	Denoting by $\widehat{\pi}$ the natural map $\bH\to\bA$, the assumptions guarantee that $\widehat{\pi}\otimes\widehat{\pi}(\Delta(R))=0$ for all $R\in\{\widetilde{R}_j\}_{j\in J_0}$. Since the set $\{\widetilde{R}_j\}_{j\in J_0}$ generates the ideal $\ker(\widehat{\pi})$, and the assumptions give that $\epsilon(\widetilde{R}_j)=0$ and $S(\widetilde{R}_j)\in\ker(\widehat{\pi})$ for each $j\in J_0$, it follows that $\bA$ is a Hopf algebra over $\bK$. 
	
	It is easy to see the assumptions guarantee that $\bA^{c,J_0}$ is a Hopf algebra, using Theorem~\ref{Th2}, and similarly straightforward to see that the natural map $\bA^{c,J_0}\rightarrow \bA$ is a homomorphism of Hopf algebras.
	
	Finally, suppose $$\Delta(R)\in \bS^{+}\otimes \bS + {{\pi}}^{-1}(Z(\bA^{c,J_0}))\otimes \bS^{+}$$ for all $R\in\{\widetilde{R}_j\}_{j\in J_0}$. We observe that, as $\bK$-vector spaces,
	$$
	{{\pi}}^{-1}(Z(\bA^{c,J_0}))=\bK\oplus {{\pi}}^{-1}(Z(\bA^{c,J_0}))^{+} \, ,
	$$
	where ${{\pi}}^{-1}(Z(\bA^{c,J_0}))^{+}\coloneqq {{\pi}}^{-1}(Z(\bA^{c,J_0}))\cap\ker(\epsilon)$, and similarly $\bS=\bK\oplus\bS^{+}$. Then, for $R=\widetilde{R}_{j}$, $j\in J_0$, we obtain that 
	$$\Delta(R)\in 1\otimes\bS^{+} + \bS^{+}\otimes 1 + {{\pi}}^{-1}(Z(\bA^{c,J_0}))\otimes \bS^{+}.$$ Using the fact that in a coalgebra we have the identity $H=\sum_{(H)}H_{(1)}\epsilon(H_{(2)})=\sum_{(H)}\epsilon(H_{(1)})H_{(2)}$, we can in fact conclude that $$\Delta(R)\in R\otimes 1 + 1\otimes R + {{\pi}}^{-1}(Z(\bA^{c,J_0}))\otimes \bS^{+}.$$
	This expression also shows that, viewing $\bA^{c,J_0}$ as an $\bA$-comodule algebra via $$\bA^{c,J_0}\xrightarrow{\Delta}\bA^{c,J_0}\otimes \bA^{c,J_0}\twoheadrightarrow \bA^{c,J_0}\otimes \bA$$ we get
	$$\rho(R)= R \otimes 1\in \bA^{c,J_0}\otimes\bA$$
	for all $R\in\widetilde{\bS}$. 
	Hence, we have that $\bZ\subseteq (\bA^{c,J_0})^{co \bA}$.
\end{proof}

%

Using Theorems~\ref{Th2} and \ref{Th3}, we obtain the following corollaries, of which the first is the most powerful.
It yields a Hopf-Galois extension with central invariants that were previously studied \cite{Rum}.

\begin{cor}\label{sHpf}
	Suppose that $\bH  = \bK\langle X_i , i\in I | R_j=0, j\in \overline{J_0} \rangle$
	is a Hopf algebra and the subalgebra $\bS$ generated by $\widetilde{R}_j\in\bH$, $j\in J_0$, is a Hopf subalgebra. Then the following results hold:
	\begin{enumerate}
		\item The centrification $\bA^{c,J_0}$ is a Hopf algebra over $\bK$. 
	\end{enumerate}
	From now on, assume that the $\widetilde{R}_j$ lie in the kernel of the counit of $\bH$.
	\begin{enumerate}[resume]
		\item The algebra $\bA$ is a Hopf algebra over $\bK$. 
		\item The natural map $\bA^{c,J_0}\rightarrow \bA$ is a homomorphism of Hopf algebras. 
		\item The subalgebra $\bZ$ is a subalgebra of $(\bA^{c,J_0})^{co \bA}$. 
	\end{enumerate}
	For the final two results, assume that $\bA^{c,J_0}$ has cocommutative coradical.
	\begin{enumerate}[resume]
		\item The inclusion $\bZ\subseteq(\bA^{c,J_0})^{co \bA}$ is, in fact, an equality. 
		\item The inclusion $\bZ\subseteq\bA^{c,J_0}$ is an $\bA$-Galois extension.
	\end{enumerate}	
\end{cor}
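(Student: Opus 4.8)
The plan is to read off (1)--(4) directly from Theorems~\ref{Th2} and~\ref{Th3}, and to obtain (5)--(6) by recognising the data as a central exact sequence of Hopf algebras to which a faithful flatness theorem applies.

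First I would record what the hypothesis ``$\bS$ is a Hopf subalgebra'' provides: $\Delta(\widetilde{R}_j)\in\bS\otimes\bS$ and $S(\widetilde{R}_j)\in\bS$ for every $j\in J_0$. Since $\bS$ is generated by the $\widetilde{R}_j$ and each image $\overline{R}_j=\pi(\widetilde{R}_j)$ is central in $\bA^{c,J_0}$ by construction, we get $\pi(\bS)=\bZ\subseteq Z(\bA^{c,J_0})$, i.e. $\bS\subseteq\pi^{-1}(Z(\bA^{c,J_0}))$. Hence $\Delta(\widetilde{R}_j)\in\pi^{-1}(Z(\bA^{c,J_0}))\otimes\pi^{-1}(Z(\bA^{c,J_0}))$ and $S(\widetilde{R}_j)\in\pi^{-1}(Z(\bA^{c,J_0}))$, which are exactly the hypotheses of Theorem~\ref{Th2}; this yields~(1). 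For (2)--(4) I would add the counit normalisation $\widetilde{R}_j\in\ker(\epsilon)$, so that $\widetilde{R}_j\in\bS^{+}$. Applying $\Id\otimes\epsilon$ and $\epsilon\otimes\Id$ to $\Delta(\widetilde{R}_j)\in\bS\otimes\bS$ then forces $\Delta(\widetilde{R}_j)=\widetilde{R}_j\otimes 1+1\otimes\widetilde{R}_j+w$ with $w\in\bS^{+}\otimes\bS^{+}$, and inspecting the three summands gives both $\Delta(\widetilde{R}_j)\in\bS^{+}\otimes\pi^{-1}(Z(\bA^{c,J_0}))+\pi^{-1}(Z(\bA^{c,J_0}))\otimes\bS^{+}$ and the stronger $\Delta(\widetilde{R}_j)\in\bS^{+}\otimes\bS+\pi^{-1}(Z(\bA^{c,J_0}))\otimes\bS^{+}$. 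Together with $S(\widetilde{R}_j)\in\bS$ these are precisely the hypotheses of Theorem~\ref{Th3} (including its stronger variant), producing (2), (3) and (4) at once.

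For (5) and~(6) I would first recast the situation as an exact sequence. Because $\pi:\bH\to\bA^{c,J_0}$ is a homomorphism of Hopf algebras (its kernel is a Hopf ideal, by the proof of Theorem~\ref{Th2}) and $\bS$ is a Hopf subalgebra, the image $\bZ=\pi(\bS)$ is a Hopf subalgebra of $\bA^{c,J_0}$; it is central, and hence normal, since $\sum b_{(1)}zS(b_{(2)})=\epsilon(b)z\in\bZ$ for all $z\in\bZ$ and $b\in\bA^{c,J_0}$. Moreover the kernel of the Hopf map $\bA^{c,J_0}\to\bA$ of~(3) is the two-sided ideal generated by the central elements $\overline{R}_j$, namely $\bA^{c,J_0}\bZ^{+}$ with $\bZ^{+}=\bZ\cap\ker(\epsilon)$, so that $\bA\cong\bA^{c,J_0}/\bA^{c,J_0}\bZ^{+}$ and we are in the central exact sequence $\bK\to\bZ\to\bA^{c,J_0}\to\bA\to\bK$.

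The decisive ingredient is faithful flatness. By the structure theory of Hopf algebras with cocommutative coradical (Masuoka), such an algebra is faithfully flat over any Hopf subalgebra, so $\bA^{c,J_0}$ is faithfully flat over $\bZ$. Given faithful flatness over the normal Hopf subalgebra $\bZ$ with quotient $\bA=\bA^{c,J_0}/\bA^{c,J_0}\bZ^{+}$, the theory of faithfully flat Hopf--Galois extensions arising from exact sequences (Schneider, Takeuchi; see also~\cite{Mon}) upgrades the inclusion $\bZ\subseteq(\bA^{c,J_0})^{co \bA}$ of~(4) to the equality in~(5), and shows that the canonical map $\bA^{c,J_0}\otimes_{\bZ}\bA^{c,J_0}\to\bA^{c,J_0}\otimes_{\bK}\bA$ is bijective, which is the $\bA$-Galois statement~(6). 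I expect the genuine obstacle to be precisely this faithful-flatness step: isolating the correct form of the cocommutative-coradical theorem and verifying its hypotheses in the present generality (in particular, bijectivity of the antipode of $\bA^{c,J_0}$ and normality of $\bZ$), since it is faithful flatness that both collapses the coinvariants onto $\bZ$ and guarantees bijectivity of the Galois map.
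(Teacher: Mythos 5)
Your proposal is correct and follows essentially the same route as the paper: items (1)--(4) are reduced to Theorems~\ref{Th2} and~\ref{Th3} via $\Delta(\widetilde{R}_j)\in\bS\otimes\bS$, the decomposition $\bS=\bK\oplus\bS^{+}$ and the counit argument, while (5)--(6) come from centrality (hence normality) of the Hopf subalgebra $\bZ$, the identification $\bA\cong\bA^{c,J_0}/\bZ^{+}\bA^{c,J_0}$, and the faithful-flatness/Hopf--Galois theorem under the cocommutative-coradical hypothesis. The only cosmetic difference is that you split the last step into Masuoka-type faithful flatness plus the Schneider--Takeuchi Galois statement, whereas the paper cites Schneider's Remark~1.1(4) (cf.\ \cite{Tak}), which bundles both.
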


\begin{proof}
	All that remains for (1), (2), (3) and (4) is to check that $$\Delta(R)\in \bS^{+}\otimes {{\pi}}^{-1}(Z(\bA^{c,J_0})) + {{\pi}}^{-1}(Z(\bA^{c,J_0}))\otimes \bS^{+}$$ for $R\in\{\widetilde{R}_j\}_{j\in J_0}\subseteq\bH$, in the case when the $\widetilde{R}_j$ lie in the kernel of the counit of $\bH$. Since $\bS$ is a Hopf subalgebra of $\bH$, we see immediately that $\Delta(R)\in\bS\otimes\bS$.
	
	Since $\bS=\bK\oplus\bS^{+}$, we conclude that, for $R\in\{\widetilde{R}_j\}_{j\in J_0}$, we have $$\Delta(R)\in\lambda 1\otimes 1 + \bK\otimes \bS^{+} + \bS^{+}\otimes\bK + \bS^{+}\otimes\bS^{+}$$ for some $\lambda\in\bK$. Then $$0=\Delta(\epsilon(R))=\epsilon\otimes\epsilon(\Delta(R))= \lambda\epsilon(1)^2=\lambda$$ and we conclude that $\lambda=0$. As $\bS^{+}$ clearly lies inside ${{\pi}}^{-1}(Z(\bA^{c,J_0}))$, the result follows.
	
	For (5) and (6), note that the assumption that $\bS$ is a Hopf subalgebra of $\bH$ guarantees that $\bZ$ is Hopf subalgebra of $\bA^{c,J_0}$. In particular, as it is central, it is a normal Hopf subalgebra of $\bA^{c,J_0}$. By Remark 1.1(4) in \cite{Schn} (cf. \cite[Th. 3.1]{Tak}), this implies that $\bA^{c,J_0}$ is faithfully flat as a $\bZ$-module and the inclusion $\bZ\subseteq\bA^{c,J_0}$ is an $\bA^{c,J_0}/\bZ^{+}\bA^{c,J_0}$-Galois extension (recalling here that $\bZ^{+}$ is the kernel of the counit of $\bZ$).
	Since the relations $\widetilde{R}_j$ for $j\in J_0$ all lie in the kernel of the counit in $\bH$, it follows that $\bZ^{+}\bA^{c,J_0}$ is the ideal of $\bA^{c,J_0}$ generated by $\widetilde{R}_j$, $j\in J_0$. In other words, the projection $\bA^{c,J_0}\to \bA^{c,J_0}/\bZ^{+}\bA^{c,J_0}$ is precisely the Hopf algebra homomorphism $\bA^{c,J_0}\to\bA$, and the result follows.
	
\end{proof}
%
%

\begin{cor}
	Suppose that $\bH  = \bK\langle X_i , i\in I | R_j=0, j\in \overline{J_0} \rangle$
	is a Hopf algebra. Let $\bS$ be the subalgebra of $\bH$ generated by the set $\{\widetilde{R}_j\}_{j\in J_0}\subseteq \bH$, and let
	$\bT$ 
	be the subalgebra of $\bH$ generated by $\bS$ and $Z(\bH)$. Suppose that
	$$\Delta(R)\in \bT\otimes \bT$$
	for all $R\in\{\widetilde{R}_j\}_{j\in J_0}\subseteq\bH$. Suppose also that $S(R)\in \bT$ for all $R\in\{\widetilde{R}_j\}_{j\in J_0}\subseteq\bH$, where $S$ is the antipode of $\bH$. Then the centrification $\bA^{c,J_0}$ is a Hopf algebra over $\bK$.
\end{cor}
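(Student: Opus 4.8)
The plan is to deduce this corollary from Theorem~\ref{Th2}. The hypotheses here differ from those of Theorem~\ref{Th2} only in that the relevant containments are stated in terms of the explicit subalgebra $\bT\subseteq\bH$ rather than in terms of the less tractable preimage $\pi^{-1}(Z(\bA^{c,J_0}))$. Hence the entire task reduces to a single inclusion: one must show that $\bT\subseteq\pi^{-1}(Z(\bA^{c,J_0}))$, equivalently $\pi(\bT)\subseteq Z(\bA^{c,J_0})$. Once this is in hand, the assumption $\Delta(R)\in\bT\otimes\bT$ upgrades to $\Delta(R)\in\pi^{-1}(Z(\bA^{c,J_0}))\otimes\pi^{-1}(Z(\bA^{c,J_0}))$, and the assumption $S(R)\in\bT$ upgrades to $S(R)\in\pi^{-1}(Z(\bA^{c,J_0}))$, for every $R\in\{\widetilde{R}_j\}_{j\in J_0}$; these are precisely the hypotheses of Theorem~\ref{Th2}, which then yields the conclusion.

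First I would record that the natural map $\pi\colon\bH\to\bA^{c,J_0}$ is a \emph{surjective} algebra homomorphism. It is well defined because the defining relations $R_j=0$ for $j\in\overline{J_0}$ of $\bH$ continue to hold in $\bA^{c,J_0}$, and it is surjective because both algebras are generated by the images of the $X_i$. This surjectivity is the one genuinely load-bearing observation in the argument.

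Next I would verify $\pi(\bT)\subseteq Z(\bA^{c,J_0})$ by checking it on the two families of algebra generators of $\bT$, namely $\bS$ and $Z(\bH)$, using that $Z(\bA^{c,J_0})$ is itself a subalgebra. For $\bS$: it is generated by the $\widetilde{R}_j$, $j\in J_0$, and by the very construction of the centrification their images $\pi(\widetilde{R}_j)=\overline{R}_j$ are central, so $\pi(\bS)=\bZ\subseteq Z(\bA^{c,J_0})$. For $Z(\bH)$: given $z\in Z(\bH)$, every element of $\bA^{c,J_0}$ can be written as $\pi(a)$ for some $a\in\bH$ by surjectivity, whence $\pi(z)\pi(a)=\pi(za)=\pi(az)=\pi(a)\pi(z)$, so $\pi(z)$ is central. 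Thus both generating families land in the centre, and therefore so does all of $\pi(\bT)$.

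Finally, having established $\bT\subseteq\pi^{-1}(Z(\bA^{c,J_0}))$, I would read off the two upgraded inclusions noted above and invoke Theorem~\ref{Th2} to conclude that $\bA^{c,J_0}$ is a Hopf algebra over $\bK$. I do not expect a serious obstacle here: the proof is a reduction, and the only step requiring any care is the claim $\pi(Z(\bH))\subseteq Z(\bA^{c,J_0})$, which is exactly why establishing surjectivity of $\pi$ comes first; everything else is routine bookkeeping with the subalgebra structure of $\bT$.
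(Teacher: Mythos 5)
Your proposal is correct and matches the paper's intended argument: the paper states this corollary without a separate proof, presenting it as a direct consequence of Theorem~\ref{Th2}, and your reduction --- verifying $\pi(\bT)\subseteq Z(\bA^{c,J_0})$ on the generating families $\bS$ (whose images $\overline{R}_j$ are central by construction of the centrification) and $Z(\bH)$ (via surjectivity of $\pi$) --- is exactly the verification the paper leaves to the reader. Nothing is missing.
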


The following proposition shows how our assumptions can change if we only want $\bA^{c,J_0}$ to be an $\bA$-comodule algebra.

\begin{prop}
	Suppose that $\bH  = \bK\langle X_i , i\in I | R_j=0, j\in \overline{J_0} \rangle$
	is a Hopf algebra. Let $\bI_{J_0}$ be the ideal of $\bH$ generated by the set $\{\widetilde{R}_j\}_{j\in J_0}\subseteq \bH$ and $\bS$ be the subalgebra generated by these same elements. Suppose further that $$\Delta(R)\in \bH\otimes \bI_{J_0} + \bS^{+}\otimes {{\pi}}^{-1}(Z(\bA^{c,J_0}))$$ for all $R\in\{\widetilde{R}_j\}_{j\in J_0}\subseteq\bH$, the $\widetilde{R}_j$ lie in the kernel of the counit of $\bH$, and $S(R)\in\bS$ for all $R\in\{\widetilde{R}_j\}_{j\in J_0}$, where $S$ is the antipode of $\bH$. Then $\bA$ is a Hopf algebra over $\bK$, $\bA^{c,J_0}$ is an $\bA$-comodule algebra, and the natural map $\bA^{c,J_0}\rightarrow \bA$ is a homomorphism of $\bA$-comodule algebras.
	
	Finally, if in fact $$\Delta(R)\in \bH\otimes \bI_{J_0} + \bS^{+}\otimes \bS$$ for all $R\in\{\widetilde{R}_j\}_{j\in J_0}\subseteq\bH$, then $\bZ$ is a subalgebra of the coinvariants $(\bA^{c,J_0})^{co \bA}$.
\end{prop}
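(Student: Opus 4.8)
The plan is to follow closely the strategy of Theorems~\ref{Th2} and \ref{Th3}, now tracking the map $\widehat{\pi}\colon\bH\to\bA$ alongside $\pi\colon\bH\to\bA^{c,J_0}$ and using throughout the factorisation $\widehat{\pi}=p\circ\pi$, where $p\colon\bA^{c,J_0}\to\bA$ is the natural projection. First I would show that $\bA$ is a Hopf algebra by checking that $\ker(\widehat{\pi})=\bI_{J_0}$ is a Hopf ideal. Applying $\widehat{\pi}\otimes\widehat{\pi}$ to the hypothesis $\Delta(R)\in\bH\otimes\bI_{J_0}+\bS^{+}\otimes{{\pi}}^{-1}(Z(\bA^{c,J_0}))$ annihilates both summands, since $\widehat{\pi}(\bI_{J_0})=0$ and $\widehat{\pi}(\bS^{+})=0$ (because $\widehat{\pi}(\widetilde{R}_j)=0$ forces $\widehat{\pi}(\bS)\subseteq\bK$). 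As $\widehat{\pi}$ is surjective, $\ker(\widehat{\pi}\otimes\widehat{\pi})=\bI_{J_0}\otimes\bH+\bH\otimes\bI_{J_0}$, which gives the comultiplication condition on the generators $\widetilde{R}_j$. The counit condition is the assumption $\epsilon(\widetilde{R}_j)=0$, and for the antipode, $S(\widetilde{R}_j)\in\bS$ combined with $\epsilon(S(\widetilde{R}_j))=\epsilon(\widetilde{R}_j)=0$ forces $S(\widetilde{R}_j)\in\bS^{+}\subseteq\bI_{J_0}$, whence $S(\bI_{J_0})\subseteq\bI_{J_0}$.

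Next I would construct the coaction as the factorisation through $\pi$ of the algebra homomorphism $\Phi\coloneqq(\pi\otimes\widehat{\pi})\circ\Delta\colon\bH\to\bA^{c,J_0}\otimes\bA$. Since $\Phi$ is multiplicative, it suffices to check that it annihilates the ideal generators $[\widetilde{X}_i,\widetilde{R}_j]$ of $\ker(\pi)$. \textbf{This is the main obstacle.} It is resolved by a double centrality: applying $\Phi$ to $\Delta(\widetilde{R}_j)$, the summand lying in $\bH\otimes\bI_{J_0}$ vanishes since $\widehat{\pi}(\bI_{J_0})=0$, while on the summand lying in $\bS^{+}\otimes{{\pi}}^{-1}(Z(\bA^{c,J_0}))$ the left leg maps into $Z(\bA^{c,J_0})$ (because $\pi(\bS^{+})\subseteq Z(\bA^{c,J_0})$) and the right leg maps into $Z(\bA)$ (because $\widehat{\pi}=p\circ\pi$ sends ${{\pi}}^{-1}(Z(\bA^{c,J_0}))$ into $p(Z(\bA^{c,J_0}))\subseteq Z(\bA)$). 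Hence $\Phi(\widetilde{R}_j)\in Z(\bA^{c,J_0})\otimes Z(\bA)$, so $\Phi([\widetilde{X}_i,\widetilde{R}_j])=[\Phi(\widetilde{X}_i),\Phi(\widetilde{R}_j)]=0$, each leg of $\Phi(\widetilde{R}_j)$ commuting past the matching leg of $\Phi(\widetilde{X}_i)$. Thus $\Phi$ descends to an algebra map $\rho\colon\bA^{c,J_0}\to\bA^{c,J_0}\otimes\bA$.

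The comodule-algebra axioms are then formal and I would verify them by precomposing with the surjection $\pi$. Coassociativity of $\Delta$ in $\bH$, together with $\Delta_\bA\circ\widehat{\pi}=(\widehat{\pi}\otimes\widehat{\pi})\circ\Delta$, yields $(\rho\otimes\mathrm{id})\circ\rho=(\mathrm{id}\otimes\Delta_\bA)\circ\rho$, and $\epsilon_\bA\circ\widehat{\pi}=\epsilon_\bH$ gives $(\mathrm{id}\otimes\epsilon_\bA)\circ\rho=\mathrm{id}$; multiplicativity of $\rho$ is inherited from $\Phi$. Finally, the identity $\Delta_\bA\circ p\circ\pi=(\widehat{\pi}\otimes\widehat{\pi})\circ\Delta=(p\otimes\mathrm{id})\circ\rho\circ\pi$ shows that $p\colon\bA^{c,J_0}\to\bA$ is a homomorphism of $\bA$-comodule algebras.

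For the last statement, under the strengthened hypothesis $\Delta(R)\in\bH\otimes\bI_{J_0}+\bS^{+}\otimes\bS$ I would compute $\rho(\overline{R}_j)=\Phi(\widetilde{R}_j)$ directly. The summand in $\bH\otimes\bI_{J_0}$ again vanishes under $\widehat{\pi}$; on the summand in $\bS^{+}\otimes\bS$, the map $\widehat{\pi}$ sends each $\bS$-leg $t$ to $\epsilon(t)1$, so $\rho(\overline{R}_j)=\pi\big((\mathrm{id}\otimes\epsilon)(\Delta(\widetilde{R}_j))\big)\otimes 1$. Since $\epsilon(\bI_{J_0})=0$, the counit axiom gives $(\mathrm{id}\otimes\epsilon)(\Delta(\widetilde{R}_j))=\widetilde{R}_j$, whence $\rho(\overline{R}_j)=\overline{R}_j\otimes 1$ and $\overline{R}_j\in(\bA^{c,J_0})^{co \bA}$. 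As the coinvariants form a subalgebra and $\bZ$ is generated by the $\overline{R}_j$, we conclude $\bZ\subseteq(\bA^{c,J_0})^{co \bA}$.
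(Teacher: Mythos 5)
Your proof is correct and takes essentially the same route as the paper: the paper likewise deduces the Hopf structure on $\bA$ and the coinvariance of $\bZ$ exactly as in Theorem~\ref{Th3}, and obtains the coaction by factoring $(\pi\otimes\widehat{\pi})\circ\Delta$ through $\pi$, checking that the generators $[\widetilde{X}_i,\widetilde{R}_j]$ of $\ker(\pi)$ are killed via Equation~\ref{Eq1}, which is precisely your ``double centrality'' argument ($\pi(\bS^{+})\subseteq Z(\bA^{c,J_0})$ on the left leg, $\widehat{\pi}(\pi^{-1}(Z(\bA^{c,J_0})))\subseteq Z(\bA)$ on the right leg) written out before projecting. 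Your version merely makes explicit several details the paper leaves implicit, such as $\widehat{\pi}(\bS^{+})=0$ and the verification of the comodule-algebra axioms by precomposition with the surjection $\pi$.
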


\begin{proof}
	
	The majority of the proof works in much the same way as in the proof of Theorem~\ref{Th3}. All that is different is showing that $\bA^{c,J_0}$ is an $\bA$-comodule algebra.
	
	To show this, it is enough to show that the map $\bH\xrightarrow{\Delta}\bH\otimes\bH \twoheadrightarrow\bA^{c,J_0}\otimes\bA$ sends elements of the form $[X,R]$ to zero, where the $X$ is in some generating set of $\bH$ and $R\in\{\widetilde{R}_j\}_{j\in J_0}\subseteq \bH$. Under our assumptions, this is clear from Equation~\ref{Eq1}.
	
\end{proof}


We already saw in Corollary~\ref{sHpf} one condition for the extension $\bA^{c,J_0}\to\bA$ to be Hopf-Galois. Let us now discuss some further properties of the extensions $\bA^{c,J_0}\to\bA$.

\begin{prop}\label{GalCond}
	Suppose that both of the algebras
	$$
	\bH= \bK\langle X_i , i\in I | R_j=0, j\in \overline{J_0} \rangle
	\longrightarrow 
	\bA= \bK\langle X_i , i\in I | R_j=0, j\in J \rangle
	$$
	are Hopf algebras and the map between them is a homomorphism of Hopf algebras.
	Suppose further that $\bA^{c,J_0}$ is an $\bA$-comodule algebra such that the natural map $\bA^{c,J_0}\rightarrow \bA$ is a homomorphism of Hopf-comodule algebras, and $\bZ$ is a subalgebra of the coinvariants $(\bA^{c,J_0})^{co \bA}$. Then the extension $\bA^{c,J_0}\to\bA$ satisfies the following properties.
	\begin{enumerate}
		\item If the extension $\bH\to\bA$ is $\bA$-cleft, then $\bA^{c,J_0}\to\bA$ is $\bA$-cleft.
		\item If the antipode $S$ satisfies $S^2=id$ and there exists a $\bK$-subspace $\bV\subseteq\bH$ such that the restriction $\bV\to\bA$ is a linear isomorphism and such that $\bV$ is a subcoalgebra of $\bH$, then $\bA^{c,J_0}\to\bA$ is $\bA$-cleft.
		\item If $\bA$ is finite-dimensional, then the extension $\bA^{c,J_0}\to\bA$ is $\bA$-Galois.
	\end{enumerate}
\end{prop}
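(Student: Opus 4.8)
The three statements all concern the tower $\bH\xrightarrow{\pi}\bA^{c,J_0}\xrightarrow{q}\bA$, in which $\widehat{\pi}=q\circ\pi$ and every map is a surjective homomorphism of right $\bA$-comodule algebras ($\bA$ carrying its regular comodule structure given by $\Delta$, and the comodule structure on $\bA^{c,J_0}$ being the one induced from $\bH$, so that $\pi$ is itself a comodule map). The plan for (1) and (2) is to manufacture a cleaving map for $\bA^{c,J_0}\to\bA$ out of data on $\bH$, while for (3) the plan is to prove surjectivity of the canonical Galois map directly and then upgrade it to bijectivity using that $\bA$ is finite-dimensional.

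For (1), suppose $\gamma\colon\bA\to\bH$ is a convolution-invertible $\bA$-comodule map witnessing that $\bH\to\bA$ is $\bA$-cleft, and set $\gamma'\coloneqq\pi\circ\gamma\colon\bA\to\bA^{c,J_0}$. Since $\pi$ is a morphism of $\bA$-comodules, $\gamma'$ is again a comodule map; since $\pi$ is a unital algebra homomorphism, $\pi\circ\gamma^{-1}$ is a two-sided convolution inverse of $\gamma'$, so $\bA^{c,J_0}\to\bA$ is $\bA$-cleft. For (2), the subcoalgebra $\bV$ does the work. Because $\widehat{\pi}$ is a coalgebra map, $\bV$ is a subcoalgebra, and $\widehat{\pi}|_{\bV}\colon\bV\to\bA$ is a linear isomorphism, the restriction $\widehat{\pi}|_{\bV}$ is an isomorphism of coalgebras, so $j\coloneqq(\widehat{\pi}|_{\bV})^{-1}\colon\bA\to\bH$ is a coalgebra section of $\widehat{\pi}$. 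A direct check shows $j$ is a right $\bA$-comodule map and that $S\circ j$ (with $S$ the antipode of $\bH$) is a two-sided convolution inverse, so $j$ is a cleaving map and $\bH\to\bA$ is $\bA$-cleft; the hypothesis $S^{2}=\mathrm{id}$, equivalently bijectivity of the antipode, is the standing assumption under which this coalgebra-section criterion is usually invoked. Part (1) then applies.

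For (3), write $\bB\coloneqq(\bA^{c,J_0})^{co\,\bA}$ and consider the canonical map $\beta\colon\bA^{c,J_0}\otimes_{\bB}\bA^{c,J_0}\to\bA^{c,J_0}\otimes_{\bK}\bA$. I would first prove $\beta$ surjective. Given $a\in\bA$, choose $h\in\bH$ with $\widehat{\pi}(h)=a$ and set $\eta\coloneqq\sum_{(h)}\pi(S(h_{(1)}))\otimes_{\bB}\pi(h_{(2)})$. Using that $\pi$ is a comodule algebra map together with coassociativity, $\beta(\eta)=\sum_{(h)}\pi\big(S(h_{(1)})h_{(2)}\big)\otimes\widehat{\pi}(h_{(3)})$; the antipode identity in $\bH$ gives $\sum_{(h)}S(h_{(1)})h_{(2)}\otimes\widehat{\pi}(h_{(3)})=1\otimes a$ inside $\bH\otimes\bA$, and applying $\pi\otimes\mathrm{id}$ yields $\beta(\eta)=1\otimes a$. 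Since the image of $\beta$ is a left $\bA^{c,J_0}$-submodule of $\bA^{c,J_0}\otimes\bA$ (multiplication on the first tensor factor), it contains every $x\otimes a=(x\otimes 1)(1\otimes a)$, hence all of $\bA^{c,J_0}\otimes\bA$, so $\beta$ is surjective. As $\bA$ is finite-dimensional, the Kreimer--Takeuchi theorem forces a surjective canonical map to be bijective, whence $\bB\subseteq\bA^{c,J_0}$ is an $\bA$-Galois extension.

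The genuinely substantive step is (3): surjectivity of $\beta$ is an honest computation needing only the antipode of $\bH$ and the fact that $\pi$ is a comodule map, but passing from surjectivity to bijectivity is exactly where finite-dimensionality of $\bA$ is indispensable, via Kreimer--Takeuchi. Two points demand care throughout: that the relevant base is the full coinvariant subalgebra $\bB$ (the hypothesis only supplies $\bZ\subseteq\bB$), and that the $\bA$-comodule structure on $\bA^{c,J_0}$ is the one induced from $\bH$, so that $\pi$ is a genuine comodule morphism — without this compatibility the reduction in (1) and the surjectivity computation in (3) both break down.
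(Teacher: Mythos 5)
Your proposal is correct and takes essentially the same route as the paper: part (1) pushes the cleaving map forward along $\pi$ exactly as the paper does, part (2) is the paper's construction phrased via the coalgebra section $j=(\widehat{\pi}|_{\bV})^{-1}$ rather than on a basis of $\bV$ (the paper's $\gamma=\pi\widehat{\gamma}$ with convolution inverse $\pi S\widehat{\gamma}$ is the same map), and part (3) invokes the same finite-dimensional reduction to surjectivity \cite[8.3.1]{Mon}. The only cosmetic difference is in (3): the paper establishes surjectivity by a commutative diagram whose top row is the (cleft, hence Galois) canonical map of $\bH$ over $\bK$, and your explicit antipode computation $\beta\bigl(\sum_{(h)}\pi(S(h_{(1)}))\otimes_{\bB}\pi(h_{(2)})\bigr)=1\otimes a$ is precisely the element-level unpacking of that diagram chase.
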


\begin{proof}
	(1) Suppose that the extension $\bH\to\bA$ is $\bA$-cleft. Then, by definition, there exists convolution-invertible $\bA$-comodule map $\gamma:\bA\to\bH$.
	Recall that ${{\pi}} :  \bH\to\bA^{c,J_0}$ is the natural map.
	It is clear that ${{\pi}}\circ\gamma:\bA\to\bA^{c,J_0}$ is an $\bA$-comodule map. 
	All that remains is to check that it is convolution invertible. This follows from the convolution calculation: 
	\begin{multline*}({{\pi}}\circ\gamma)\ast({{\pi}}\circ\gamma^{-1})(A)=\sum_{(A)}{{\pi}}(\gamma(A_{(1)})){{\pi}}(\gamma^{-1}(A_{(2)}))
		\\={{\pi}}(\gamma\ast\gamma^{-1}(A))={{\pi}}(\epsilon(A))=\epsilon(A).
	\end{multline*}
	(2) Let $V_m$, $m\in M$, be a $\bK$-basis of the vector space $\bV$. Letting $\widehat{\pi}$ be the natural map $\bH\to\bA$, we shall then denote $\widehat{V_m}\coloneqq\widehat{\pi}(V_m)$, so that $\widehat{V_m}$, $m\in M$, is a $\bK$-basis of $\bA$. Define $\gamma:\bA\to\bA^{c,J_0}$ to be the linear map defined on the given basis as $$\gamma(\widehat{V_m})={{\pi}}(V_m)$$ and $\widehat{\gamma}:\bA\to\bH$ to be the linear map defined by $$\widehat{\gamma}(\widehat{V_m})=V_m.$$  We would like to know if $\gamma$ is an $\bA$-comodule map and if it is convolution-invertible. 
	
	Let us first show that $\gamma$ is convolution-invertible with convolution inverse $\pi S\widehat{\gamma}$. It is enough to show this on our given basis of $\bA$, as follows:
	\begin{align*}
		\gamma\ast\pi S\widehat{\gamma}(\widehat{V_m}) = &\sum_{(\widehat{V_m})}\gamma(\widehat{V_{m,(1)}})\pi S\widehat{\gamma}(\widehat{V_{m,(2)}}) \stackrel{\mbox{\tiny 2}}{=}
		\\ \sum_{(V_m)}\gamma\widehat{\pi}(V_{m,(1)})\pi S\widehat{\gamma}\widehat{\pi}(V_{m,(2)}) \stackrel{\mbox{\tiny 3}}{=} & \; \pi\left(\sum_{(V_m)}\widehat{\gamma}\widehat{\pi}(V_{m,(1)}) S\widehat{\gamma}\widehat{\pi}(V_{m,(2)})\right)
		\\ \stackrel{\mbox{\tiny 4}}{=} \pi\left(\sum_{(V_m)}V_{m,(1)} S(V_{m,(2)})\right) = & \; \pi\varepsilon_{\bH}(V_m) \stackrel{\mbox{\tiny 6}}{=} \varepsilon_{\bA}(\widehat{V_m})1_{\bA^{c,J_0}}.
	\end{align*}	
	Here, the second and sixth equality follow from $\widehat{\pi}$ being a homomorphism of Hopf algebras, the third from the fact that $\gamma=\pi\widehat{\gamma}$, and the fourth from the fact that $\bV$ is a subcoalgebra in $\bH$ (so each $V_{m,(1)}$ and $V_{m,(2)}$ are a $\bK$-linear combination of some $V_m$, $m\in M$) and that $\widehat{\gamma}\widehat{\pi}(V_m)=V_m$ for all $m\in M$.  A similar argument holds for $\pi S\widehat{\gamma}\ast\gamma$.
	
	Now, we just need to ask whether $$\rho\circ\gamma(\widehat{V_m})=(\gamma\otimes id)\circ\Delta_{\bA}(\widehat{V_m})$$ for all $m\in M$. This follows from a similar calculation as the one used for the convolution invertibility. 
	
	
	(3) If $\bA$ is finite-dimensional, then we only need to show that the map $$\alpha:\bA^{c,J_0}\otimes_{\bB}\bA^{c,J_0}\to\bA^{c,J_0}\otimes_{\bK}\bA,\quad A\otimes_{\bB}A'\mapsto(A\otimes 1)\rho(A')$$ is surjective \cite[8.3.1]{Mon}, where $\bB\coloneqq(\bA^{c,J_0})^{co \bA}$. This follows by considering the commutative diagram 
	
	$$
	\xymatrix{
		\bH\otimes_{\bK} \bH \ar@{->>}[rr]^{\beta} \ar@{->}[d]^{{{\pi}}\otimes_{\bB}{{\pi}}} & & \bH\otimes_{\bK} \bH \ar@{->}[d]^{{{\pi}}\otimes\widehat{\pi}} & & \\
		\bA^{c,J_0}\otimes_{\bB}\bA^{c,J_0} \ar@{->}[rr]^{\alpha} & & \bA^{c,J_0}\otimes_{\bK}\bA & &\\
	}
	$$
	
	where the top row is the map $\beta:H\otimes H'\mapsto (H\otimes 1)\Delta(H')$. In particular, the vertical maps are surjective since they are obtained from projections, and the top row is surjective since the identity $\bH\to\bH$ is an $\bH$-Galois extension (it is clearly $\bH$-cleft) with $\bH^{co \bH}=\bK$. Hence, $\bA^{c,J_0}\otimes_{\bB}\bA^{c,J_0}\to\bA^{c,J_0}\otimes_{\bK}\bA$ is surjective.

\end{proof}



\section{Free Associative Hopf Algebra}
\label{s3}
In this section we consider the free associative algebra\footnote{We henceforth omit the indexing set when no confusion is likely to arise, in order to avoid cluttering up equations.} $\bK \langle X_i \rangle$ with the standard Hopf algebra structure
$$
\Delta (X_i)= 1\otimes X_i+ X_i \otimes 1 \, .
$$
We further assume that all $R_j$ are primitive.
It follows that all $[R_i,R_j]$ are also primitive (Lemma~\ref{main_lemma})
and $\bH$, $\bA$ and $\bA^{c,J_0}$ are all Hopf algebras (Theorem~\ref{Th1}). We may also conclude that $\bZ=\bA^{c,J_0}$ and that $\bA^{c,J_0}\to\bA$ is an $\bA$-Galois extension (Corollary~\ref{sHpf}).

\subsection{Lie algebras}
\label{lie_alg}
If $\bK$ is a field of characteristic zero, the primitive elements of $\bK \langle X_i \rangle$
are precisely elements of the free Lie algebra $\Lie \langle X_i \rangle$ over $\bK$. Thus, our relations
define three Lie algebras and Lie algebra homomorphisms
$$
\Lie \langle X_i \rangle
\rightarrow
\fh = \Lie \langle X_i  | R_j  \rangle
\rightarrow
\fa^{c,J_0} = \Lie \langle X_i | [X_i,R_k], R_j \rangle 
\rightarrow
\fa=\Lie \langle X_i | R_l \rangle
$$
(where $j\in \overline{J_0}$,  $k\in {J_0}$,  $l\in {J}$)
so that $\fa^{c,J_0}$ is a central extension of $\fa$ and 
our associative algebras are their universal enveloping algebras: 
$$
\bK \langle X_i \rangle = U (\Lie \langle X_i \rangle)
\rightarrow
\bH = U(\fh)
\rightarrow
\bA^{c,J_0} = U(\fa^{c,J_0})
\rightarrow
\bA = U(\fa) \, .
$$
Let $\bK$ be a field of positive characteristic $p$.
These considerations remain valid as soon as all the relations $R_i$ 
are elements of the free Lie algebra $\Lie \langle X_i \rangle$

Let us contemplate arbitrary primitive relations in positive characteristic.
The primitive elements of $\bK \langle X_i \rangle$
are precisely elements of the free restricted Lie algebra $\Lip \langle X_i \rangle$ over $\bK$. Thus, our relations
define three restricted Lie algebras and their homomorphisms
\begin{multline*}
\Lip \langle X_i \rangle
\rightarrow
\fh = \Lip \langle X_i  | R_j  \rangle
\\ \rightarrow
\fa^{c,J_0} = \Lip \langle X_i | [X_i,R_k], R_j \rangle 
\rightarrow
\fa=\Lip \langle X_i | R_l \rangle
\end{multline*}
so that $\fa^{c,J_0}$ is a central extension of $\fa$ and 
our associative algebras are their restricted enveloping algebras: 
$$
\bK \langle X_i \rangle = U_0 (\Lip \langle X_i \rangle)
\rightarrow
\bH = U_0(\fh)
\rightarrow
\bA^{c,J_0} = U_0(\fa^{c,J_0})
\rightarrow
\bA = U_0(\fa) \, .
$$

\subsection{Versal central extension}
Let $X_i$ be a basis of a Lie algebra $\fg$ over $\bK$.
In our previous notation, consider
$$
\bH = \bK\langle X_i\rangle \, , \ 
\bA = U(\fg) = \bK\langle X_i | R_{i,j} \rangle
$$
where ${R_{i,j}}\coloneqq X_iX_j-X_jX_i-[X_i,X_j]$ for all $i<j$. It is straightforward to see that the conditions of Corollary~\ref{sHpf} and Proposition~\ref{GalCond}(2) are satisfied, so we immediately obtain (writing $\bA^z$ for $\bA^{z,J}$, where $J$ is the full indexing set of relations) that $\bA^z\to \bA$ is $\bA$-cleft. In particular, we get that $\bA^z$ is a crossed product of $\bZ$ with $U(\fg)$.
In fact, the discussion in Section \ref{lie_alg} reveals that its full centrification $\bA^z$ will be (in any characteristic) a universal enveloping algebra $U(\widehat{\fg})$
of some central extension $\varpi:\widehat{\fg}\rightarrow \fg$. As above, $\widehat{\fg}$ has a Lie algebra presentation 
$\Lie \langle X_i | [X_i,R_{j,k}] \rangle$,
so there is a canonical splitting $\fg\to\widehat{\fg}$ induced by sending $X_i\mapsto X_i$. This leads 
to the natural bilinear form (cf. \cite[Ex. 7.7.5]{W}),
$$ \lambda: \fg \times \fg \rightarrow \ker (\varpi),
\ (X_i, X_j)\mapsto R_{i,j}.$$
Since $\varpi:\widehat{\fg}\rightarrow \fg$ is a central extension of Lie algebras,  
the form $\lambda$ is a 2-cocycle with coefficients in the trivial $\fg$-module. This means that 
\begin{equation} \label{cocycle_cond}
  R_{i,j} + R_{j,i} = 0\ \mbox{ and } \
  \lambda ([X_i,X_j] ,  X_k)  + \lambda ([X_j,X_k], X_i)  +  \lambda ([X_k,X_i] ,  X_j) = 0
\end{equation}  
for all $i$, $j$ and $k$. 
It follows that
\begin{equation} \label{cocycle_cond_2}
\bZ \cong \bK [R_{i,j}] / (\mbox{equations~\eqref{cocycle_cond}}) \cong \Sym (\wedge^2 \fg) / (\mbox{cocycle conditions})
\end{equation}
so that $\bZ$ can be identified with the polynomial functions on $Z^2 (\fg,\bK)$ -- for example,
$R_{i,j}$ is the function sending a cocycle $\mu\in Z^2 (\fg,\bK)$ to $\mu(X_i,X_j)$.
Notice the ``cocycle conditions'' in \eqref{cocycle_cond_2} are the second equations in \eqref{cocycle_cond} for all $i,j,k$:
\begin{equation*}
[X_i,X_j] \wedge  X_k  + [X_j,X_k] \wedge X_i  +  [X_k,X_i] \wedge X_j = 0
\end{equation*} 
Thus, $\widehat{\fg}$ is a ``versal'' central extension of $\fg$, whose kernel is $Z^2 (\fg,\bK)^\ast$. Indeed (cf. \cite[Th. 7.9.2]{W}), if $\fg$ is perfect, then the universal central extension of $\fg$ is $[\widehat{\fg},\widehat{\fg}]\to\fg$, and for any other central extension $\fe\to\fg$ the unique map $[\widehat{\fg},\widehat{\fg}]\to \fe$ coming from the universal property is simply a restriction of a (non-necessarily-unique) map $\widehat{\fg}\to\fe$.

\subsection{Universal enveloping algebra from reduced enveloping algebra}
\label{ug_from_ug}
Suppose that $(\fg,\,^{[p]})$ is a restricted Lie algebra over a field $\bK$ of positive characteristic $p$.
Given a linear form $\chi\in\fg^{*}$ we can give a presentation of the restricted enveloping algebra $U_\chi(\fg)$ as follows. If $I$ is an indexing set of a basis $\{X_i\}_{i\in I}$ of $\fg$, then within the free associative Hopf algebra $\bK\langle X_i,\vert\,i\in I \rangle$ we define the elements
$$
{R_{i,k}} =  X_iX_k-X_kX_i-[X_i,X_k], \
{R_i}\coloneqq X_i^p-X_i^{[p]}-\chi(X_i)^p
$$
for $i,k\in I$. 
Now, set $J_0=\{(i,k)\,\vert\, i<k\}\subset I\times I$, $J_1=I$ and $J=J_0\coprod J_1$. Then 
$$U_\chi(\fg)=\langle X_i, i\in I\,\vert\, {R_j},j\in J\rangle.$$

Using centrality of the elements $X_k^p-X_k^{[p]}$ in $U(\fg)$, it is straightforward to conclude that
$$
U_\chi(\fg)^{c,J_1}=U(\fg)
\mbox{ and }
\bZ=\bK[X_k^p-X_k^{[p]}]_{k\in I},
$$
the latter known as the $p$-centre of $U(\fg)$. We can further observe that $U_\chi(\fg)^{c,J_0}$ is a free $\bZ$-module, and if $\vert I\vert=n<\infty$, it has finite rank $p^{n}$. 

Since the elements ${R_{i,k}}$ and ${R_k}$ for $i,k\in I$ are primitive in $\bK\langle X_i,\vert\,i\in I \rangle$, Theorem \ref{Th2} leads to the unsurprising result that $U(\fg)$ is a Hopf algebra. If $\chi=0$, then all the conditions of Corollary~\ref{sHpf} are satisfied, and we obtain the similarly expected result that $U_0(\fg)$ is a Hopf algebra, the natural map $U(\fg)\to U_0(\fg)$ is a homomorphism of Hopf algebras, $\bZ$ is equal to the subalgebra of coinvariants $U(\fg)^{co U_0(\fg)}$, and the extension $U(\fg)^{co U_0(\fg)}\to U(\fg)$ is $U(\fg)$-cleft. Furthermore, it is straightforward to check that the conditions for Proposition~\ref{GalCond}(2) hold, and so the extension $U(\fg)^{co U_0(\fg)}\to U(\fg)$ is in fact $U(\fg)$-cleft.


\section{Askey-Wilson algebra} 
\label{s4}

One existing appearance of centrifications in the literature comes from the Askey-Wilson polynomials. The Askey-Wilson algebra $AW$ was first introduced by Zhedanov in \cite{Z} in order to facilitate a study of these polynomials. Fixing $q\in \bK$ with $q^4\neq 1$, and structure constants $\underline{s}=(b,c_0,c_1,d_0,d_1)\in\bK^5$,
we may define the following three relations in $\bK\langle X,Y,Z\rangle$:
$${R}_X\coloneqq qYZ-q^{-1}ZY - bY - c_0X - d_0,$$
$${R}_Y\coloneqq qZX-q^{-1}XZ - bX - c_1Y - d_1,$$
$${R}_Z\coloneqq qXY-q^{-1}YX -Z.$$
The Askey-Wilson algebra $AW_q(\underline{s})$ is defined as
\begin{equation} \label{AW1}
	AW_q(\underline{s}) \coloneqq
	\bK\langle X,Y,Z\,\vert\, {R}_X, {R}_Y, {R}_Z\rangle \, .
\end{equation}
There is a different version 
of the Askey-Wilson algebra \cite{KZ}.
Specifically, given an element $0\neq q\in\bK$ such that $q^4\neq 1$ and three elements $a,b,c\in\bK$, we define
another version of the Askey-Wilson algebra: 
\begin{equation} \label{AW2}
	AW_q(a,b,c) \coloneqq \bK\langle A,B,C\,\vert {R}_A,{R}_B,{R}_C\rangle
	\ \mbox{ where }
\end{equation}   
$${R}_{A}\coloneqq A + \dfrac{qBC-q^{-1}CB}{q^2-q^{-2}} - \dfrac{a}{q+q^{-1}},$$
$${R}_{B}\coloneqq B + \dfrac{qCA-q^{-1}AC}{q^2-q^{-2}} - \dfrac{b}{q+q^{-1}},$$
$${R}_{C}\coloneqq C + \dfrac{qAB-q^{-1}BA}{q^2-q^{-2}} - \dfrac{c}{q+q^{-1}}.$$

Later on Terwilliger introduced the \emph{universal Askey-Wilson algebra} $\Delta$ \cite{T}.
This is the full centrification of the presentation~\eqref{AW2} of the Askey-Wilson algebra.
It is clear that this algebra does not depend on the choice of $a,b,c$, although it does still depend on $q$. Equipping the set $\{A,B,C\}$ with the ordering $A\succ B\succ C$, it is straightforward to check that
$\bK\langle A,B,C\,\vert {R}_A,{R}_B,{R}_C\rangle$ is a Gr\"{o}bner-Shirshov basis for $AW$, and one can further calculate
that the only possible obstacle vanishes:
$$\Obs({R}_C\circ_{B}{R}_A)=0.$$
By Proposition~\ref{GSBasis}, the following is a Gr\"{o}bner-Shirshov basis over $\bZ$:  
$$
\Delta = AW_q(a,b,c)^z=\bZ\langle A,B,C\,\vert\, {R}_A-\overline{R_A}, {R}_B-\overline{R_B}, {R}_C-\overline{R_C}\rangle \, .
$$

\section{Bannai-Ito algebra and anticommutator spin algebra}
\label{s5}

The \emph{anticommutator spin algebra}
is the following algebra \cite{AK,GP}:
\begin{equation} \label{AS}
	AS \coloneqq \bK\langle X,Y,Z\,\vert {R}_X,{R}_Y,{R}_Z\rangle
	\ \mbox{ where }
\end{equation}   
$$
{R}_X\coloneqq YZ+ZY-X, \ 
{R}_Y\coloneqq XZ+ZX - Y, \ 
{R}_Z\coloneqq XY+YX-Z.$$


This is a special case of a more general construction: the \emph{Bannai-Ito algebra}. These were introduced in \cite{TVZ} as a tool for understanding the Bannai-Ito polynomials -- a type of polynomial which can be interpreted as a $q\to -1$ limit of Askey-Wilson polynomials. The Bannai-Ito algebra depends on a triple $\omega=(\omega_1,\omega_2,\omega_3)\in\bK^3$:
$$
BI^\omega\coloneqq \bK\langle X,Y,Z\,\vert {R}_X-\omega_1,{R}_Y-\omega_2,{R}_Z-\omega_3\rangle \, .
$$

From our point of view, the full centrification of the presentation~\eqref{AS} is {\em the universal Bannai-Ito algebra}.
Using the ordering $X\succ Y\succ Z$, it is straightforward to check that
$\bK\langle X,Y,Z\,\vert {R}_X,{R}_Y,{R}_Z\rangle$ is a Gr\"{o}bner-Shirshov basis for $AS$, and one can further calculate
that the only possible obstacle vanishes:
$$\Obs({R}_Z\circ_{Y}{R}_X)=0.$$
Thus,
$$AS^z=\bZ\langle X,Y,Z\,\vert\, {R}_X-\overline{R_X}, {R}_Y-\overline{R_Y}, {R}_Z-\overline{R_Z}\rangle$$ is a Gr\"{o}bner-Shirshov basis over $\bZ$, using Proposition~\ref{GSBasis}. One can further observe that $\bZ$ is the polynomial algebra
$\bK [\overline{R_X}, \overline{R_Y}, \overline{R_Z} ]$ and that the Bannai-Ito algebra $BI^\omega$ is isomorphic to $AS^z \otimes_{\bZ}\bK$ where the homomorphism $\bZ\rightarrow \bK$ is given by $\omega$:
$$
\overline{R_X}\mapsto \omega_1, \
\overline{R_Y}\mapsto \omega_2, \
\overline{R_Z}\mapsto \omega_3 \, .
$$

\end{document}